\def\qed{ \ \vrule width.2cm height.2cm depth0cm\smallskip}
\newenvironment{proof}{\noindent {\bf Proof.\/}}{$\qed$\vskip 0.1in}
\newcommand{\ba}{\begin{array}}
\newcommand{\ea}{\end{array}}
\newcommand{\be}{\begin{equation}}
\newcommand{\ee}{\end{equation}}
\newcommand{\bea}{\begin{eqnarray}}
\newcommand{\eea}{\end{eqnarray}}
\newcommand{\beaa}{\begin{eqnarray*}}
\newcommand{\eeaa}{\end{eqnarray*}}
\def\dbE{\mathbb{E}}
\def\dbF{\mathbb{F}}
\def\dbP{\mathbb{P}}
\def\dbR{\mathbb{R}}
\def\a{\alpha}
\def\g{\gamma}
\def\d{\delta}
\def\e{\varepsilon}
\def\l{\lambda}
\def\si{\sigma}
\def\t{\tau}
\def\f{\varphi}
\def\th{\theta}
\def\o{\omega}
\def\Th{\Theta}
\def\F{\Phi}
\def\O{\Omega}
\def\cA{{\cal A}}
\def\cD{{\cal D}}
\def\cF{{\cal F}}
\def\cL{{\cal L}}
\def\cP{{\cal P}}
\def\cT{{\cal T}}
\def\no{\noindent}
\def\ms{\medskip}
\def\q{\quad}
\def\pa{\partial}
\def\cd{\cdot}
\def\qed{ \hfill \vrule width.25cm height.25cm depth0cm\smallskip}
\def\limsup{\mathop{\overline{\rm lim}}}
\def\liminf{\mathop{\underline{\rm lim}}}
\def\pa{\partial}
\def\cd{\cdot}
\def\1{{\bf 1}}
\def\:{\!:\!}
\newcommand{\bes}{\begin{subequations}}
\newcommand{\ees}{\end{subequations}}
\newcommand{\bgt}{\begin{gather}}
\newcommand{\egt}{\begin{gather}}
\begin{document}

\title{\textbf{A dual algorithm for stochastic control problems:
Applications to Uncertain Volatility Models and CVA}}
\author{Pierre \textsc{Henry-Labordere}\thanks{%
Soci\'et\'e G\'en\'erale, pierre.henry-labordere@sgcib.com} \and Christian 
\textsc{Litterer}\thanks{%
Centre de Math\'{e}matiques Appliqu\'{e}es, Ecole Polytechnique, Palaiseau,
France, the research of Christian Litterer was supported by ERC grant 321111
RoFiRM.} \and Zhenjie \textsc{Ren}\thanks{%
Centre de Math\'{e}matiques Appliqu\'{e}es, Ecole Polytechnique, Palaiseau,
France, ren@cmap.polytechnique.fr, the research of Zhenjie Ren was supported
by grants from R\'{e}gion Ile-de-France} }
\maketitle

\abstract{
We derive an algorithm in the spirit of Rogers \cite{R} and Davis, Burstein \cite{DB} that leads to upper bounds for stochastic control problems. Our bounds complement
lower biased estimates recently obtained in Guyon, Henry-Labord\`ere \cite{guy}. We evaluate our estimates in
numerical examples motivated from mathematical finance.
}

\newtheorem{thm}{Theorem}[section] \newtheorem{cor}[thm]{Corollary} %
\newtheorem{lem}[thm]{Lemma} \newtheorem{prop}[thm]{Proposition} %
\newtheorem{defn}[thm]{Definition} \newtheorem{rem}[thm]{Remark} %
\newtheorem{exa}[thm]{Example} \newtheorem{assum}[thm]{Assumption}

\renewcommand {\theequation}{\arabic{section}.\arabic{equation}}

\numberwithin{equation}{section} \numberwithin{thm}{section}

\section{Introduction}

\noindent Solving stochastic control problems, for example by approximating
the Hamilton-Jacobi-Bellman (HJB) equation, is an important problem in applied
mathematics. Classical PDE methods are effective tools for solving such
equations in low dimensional settings, but quickly become computationally
intractable as the dimension of the problem increases: a phenomenon commonly
referred to as "the curse of dimensionality". Probabilistic methods on the
other hand such as Monte-Carlo simulation are less sensitive to the
dimension of the problem. It was demonstrated in Pardoux \& Peng \cite{PP}
and Cheridito, Soner, Touzi \& Victoir \cite{CSTV} that first and second
backward stochastic differential equations (in short BSDE) can provide
stochastic representations that may be regarded as a non-linear
generalization of the classical Feynman-Kac formula for semi-linear and
fully non-linear second order parabolic PDEs.

\bigskip

The numerical implementation of such a BSDE based scheme associated to a
stochastic control problem was first proposed in Bouchard \& Touzi \cite%
{BouchardTouziBSDE}, also independently in Zhang \cite{ZhangBSDENum}.
Further generalization was provided in Fahim, Touzi \& Warin \cite{fah} and
in Guyon \& Henry-Labord\`ere \cite{guy}. The algorithm in \cite{guy}
requires evaluating high-dimensional conditional expectations, which are
typically computed using parametric regression techniques. Solving the BSDE
yields a sub-optimal estimation of the stochastic control. Performing an
additional, independent (forward) Monte-Carlo simulation using this
sub-optimal control, one obtains a biased estimation: a lower bound for the
value of the underlying stochastic control problem. Choosing the right basis
for the regression step is in practice a difficult task, particularly in
high-dimensional settings. In fact, a similar situation arises for the
familiar Longstaff-Schwarz algorithm, which also requires the computation of
conditional expectations with parametric regressions and produces a
low-biased estimate.

\bigskip

As the algorithm in \cite{guy} provides a biased estimate, i.e. a lower
bound it is of limited use in practice, unless it can be combined with a
dual method that leads to a corresponding upper bound.
 Such a dual expression was obtained by Rogers \cite{R}, building
on earlier work by Davis and Burstein \cite{DB}. While the work of Rogers is
in the discrete time setting, it applies to a general class of Markov
processes. Previous work by Davis and Burstein \cite{DB} linking
deterministic and stochastic control using flow decomposition techniques
(see also Diehl, Friz, Gassiat \cite{DFG} for a rough path approach to this
problem) is restricted to the control of a diffusion in its drift term. In
the present paper we are also concerned with the control of diffusion
processes, but allow the control to act on both the drift and the volatility
term in the diffusion equation. The basic idea underlying the dual algorithm
in all these works is to replace the stochastic control by a pathwise
deterministic family of control problems that are not necessarily adapted.
The resulting "gain" of information is compensated by introducing a
penalization analogous to a Lagrange multiplier. In contrast to \cite{DB} and
\cite{DFG}, we do not consider continuous pathwise, i.e. deterministic,
optimal control problems. Instead, we rely on a discretization result for
the HJB equation due to Krylov \cite{K-pc} and recover the solution of the
stochastic control problem as the limit of deterministic control problems
over a finite set of discretized controls.

\bigskip

Our paper is structured as follows. In Section \ref{sec:2} we introduce the
stochastic control problem and derive the dual bounds in the Markovian
setting for European type payoffs. In Section \ref{sec:3} we generalize our
estimates to a non-Markovian setting, i.e. where the payoff has a path
dependence. Finally, in Section \ref{sec:4} we consider a setting suitable
for pricing American style options in a Markov setting. We evaluate the
quality of the upper bounds obtained in two numerical examples. First, we
consider the pricing of a variety of options in the uncertain volatility
model. Based on our earlier estimates we transform the stochastic
optimization problem into a family of suitably discretized deterministic
optimizations, which we can in turn approximate for example using local
optimization algorithms. Second, we consider a problem arising in credit
valuation adjustment. In this example, the deterministic optimization can
particularly efficiently be solved by deriving a recursive ODE solution to
the corresponding Hamilton-Jacobi equations. Our algorithm complements the
lower bounds derived in \cite{guy} by effectively re-using some of the
quantities already computed when obtaining the lower bounds (cf. Remark \ref%
{rem:1}).

\section{Duality result for European options\label{sec:2}}

\subsection{Notations}

We begin by introducing some basic notations. For any $k\in \mathbb{N}$ let 
\begin{equation*}
\Omega ^{k}:=\{\omega :\omega \in C([0,T],\mathbb{R}^{k}),\omega _{0}=0\}.
\end{equation*}%
Let $d,m\in \mathbb{N}$ and $T>0$. Define $\Omega :=\Omega ^{d},$ $\Theta
:=[0,T]\times \Omega $ and let $B$ denote the canonical process on $\Omega
^{m}$ with $\mathbb{F}=\{\mathcal{F}_{t}\}_{0\leq t\leq T}$ the filtration
generated by $B$. Finally, denote by $\mathbb{P}_{0}$ the Wiener measure.

For $h>0$, consider a finite partition $\{t_{i}^{h}\}_{i}$ of $[0,T]$ with
mesh less than $h,$ i.e. such that $t_{i+1}^{h}-t_{i}^{h}\leq h$ for all $i$%
. For some $M>0$, let $A$ be a compact subset of 
\begin{equation*}
O_{M}:=\{x\in \mathbb{R}^k:|x|\leq M\},\quad\mbox{for some}~~k\in\mathbb{N},
\end{equation*}%
and $N^{h}$ be a finite $h$-net of $A$, i.e. for all $a,b\in N^h\subset A$,
we have $|a-b|\le h$. We define sets:

\begin{itemize}
\item $\mathcal{A}:=\Big\{\varphi :\Theta \rightarrow \mathbb{R}%
^{k}:~\varphi ~\mbox{is}~\mathbb{F}\mbox{-adapted, and takes values in}~A%
\Big\}$;

\item $\mathcal{A}_h:=\Big\{\varphi\in\mathcal{A}:~ \varphi~%
\mbox{is
constant on}~[t^h_i,t^h_{i+1})~\mbox{for}~i,~\mbox{and takes values in}~N^h%
\Big\}$;

\item $\mathcal{U}:=\Big\{\varphi:\Theta\rightarrow\mathbb{R}^d: ~\varphi~%
\mbox{is bounded and $\dbF$-adapted}\Big\}$;

\item $\mathcal{D}_{h}:=\Big\{f :[0,T]\rightarrow \mathbb{R}^{k}:~f ~%
\mbox{is constant on}~[t_{i}^{h},t_{i+1}^{h})~\mbox{for}~i,~%
\mbox{and
takes values in}~N^{h}\Big\}$.
\end{itemize}

\noindent For the following it is important to note that $\mathcal{D}_{h}$
is a finite set of piecewise constant functions.

We would like to emphasize that, throughout this paper, $C$ denotes a generic constant, which may change from line to line. For example the reader may find $2C\le C$, without any contradiction as the left-hand side $C$ is different from the right-hand side $C$.

%We consider the stochastic optimal control problem:
%u_0=\sup_{\a\in\cA} \dbE^{\dbP_0}\Big[\int_0^T f(t,\a_t,X^{\a})dt
%+ g(X^{\a})
%\Big],$$
%where $X^{\a}:=\int_0^\cd \mu(t,\a_t,X^\a)dt + \int_0^\cd \si(t,\a_t,X^\a)dB_t$. The functions $f$, $g$, $\mu$ and $\si$ will be specified later.

\subsection{The Markovian case}

\label{subsec:markov}

We consider stochastic control problems of the form: 
\begin{equation}
u_{0}=\sup_{\alpha \in \mathcal{A}}\mathbb{E}^{\mathbb{P}_{0}}\Big[%
\int_{0}^{T} R^\a_{t} f(t,\alpha_{t},X_{t}^{\alpha })dt + R^\a_{T}g(X_{T}^{\alpha })\Big],  \label{u0}
\end{equation}
where $R^\a_{t}:=e^{-\int_{0}^{t}r(s,\alpha _{s},X_{s}^{\alpha
})d s}$, $X^{\alpha }$ is a $d$-dimensional controlled diffusion defined by%
\begin{equation*}
X^{\alpha }:=\int_{0}^{\cdot }\mu (t,\alpha _{t},X_{t}^{\alpha
})dt+\int_{0}^{\cdot }\sigma (t,\alpha _{t},X_{t}^{\alpha })dB_{t},
\end{equation*}%
and the functions $\mu ,$ $\sigma ,$ $f,$ $r$ satisfy the following
assumption.

\begin{assum}
\label{assum: Markov}The functions $\mu,\sigma ,f,r$ defined on $\dbR^+\times A\times \dbR^d$ take values in $\dbR^d,\dbR^{d\times m},\dbR,\dbR$ respectively. Assume that
\begin{itemize}
\item $\mu,\sigma ,f,r$ are uniformly bounded, and continuous in $\a$;

\item $\mu,\sigma ,f,r$ are uniformly $\d_0$-H\"{o}lder continuous in $t$, for some fixed constant $\d_0\in (0,1]$;

\item $\mu,\si$ are uniformly Lipschitz in $x$, and $f,r$ are uniformly $\d_0$-H\"{o}lder continuous in $x$;

\item $g:\dbR^d\rightarrow \dbR$ is continuous.
\end{itemize}
\end{assum}

\begin{rem}\label{rem:cont-coef}
Our assumptions match the assumptions on the continuity of the coefficients in Krylov \cite{K-pc,K-fd}, and allow us to apply his results.
\end{rem}

\bigskip Our main result is a duality in the spirit of \cite{DB} that allows
us to replace the stochastic control problem by a family of suitably
discretized deterministic control problems.  We first discretize the control problem through the following lemma which is a direct consequence of Theorem
2.3 in Krylov \cite{K-pc}.

\no Define the function 
\begin{equation*}
u_{0}^{h}:=\sup_{\alpha \in \mathcal{A}_{h}}\mathbb{E}^{\mathbb{P}_{0}}\Big[%
\int_{0}^{T}R^\a_{t}f(t,\alpha_{t},X_{t}^{\alpha })dt + R^\a_{T} g(X_{T}^{\alpha })\Big].
\end{equation*}
\begin{lem}
\label{lem:krylov}Suppose Assumption \ref{assum: Markov} holds and $g$ is
bounded. We have for any family of partition of $[0,T]$ with mesh tending to zero that
\begin{equation}
u_{0}=\lim_{h\rightarrow 0}u_{0}^{h}.  \label{n-pbm}
\end{equation}
\end{lem}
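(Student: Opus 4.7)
The plan is to establish the equality in (\ref{n-pbm}) by a two-sided inequality, deriving the non-trivial direction as a direct consequence of the discretization theorem of Krylov (Theorem 2.3 in \cite{K-pc}). The easy direction is the inequality $u_{0}^{h}\leq u_{0}$, which follows immediately from the inclusion $\mathcal{A}_{h}\subset \mathcal{A}$ since every piecewise constant control in $\mathcal{A}_{h}$ is an $\mathbb{F}$-adapted process with values in $A$. The work therefore lies in proving the reverse bound $u_{0}\leq \liminf_{h\to 0}u_{0}^{h}$.

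For the reverse direction, the strategy is to show that every admissible $\alpha \in \mathcal{A}$ can be approximated, in terms of the associated expected payoff, by elements of $\mathcal{A}_{h}$ as $h\to 0$. The key input is Krylov's result, which in the form relevant here asserts that under the continuity and boundedness hypotheses recalled in Remark \ref{rem:cont-coef} the value function of the stochastic control problem over the full class $\mathcal{A}$ coincides with the limit of value functions over controls that are piecewise constant on a partition of mesh $h$ and take values in a finite $h$-net of $A$. Applying that theorem to the coefficients $(\mu ,\sigma ,f,r)$ and terminal function $g$ in (\ref{u0}) yields (\ref{n-pbm}) provided the hypotheses of the theorem are verified.

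Accordingly, the next step is to check that Assumption \ref{assum: Markov} together with the boundedness of $g$ matches the continuity, H\"{o}lder regularity, Lipschitz, and boundedness requirements on $(\mu ,\sigma ,f,r,g)$ used by Krylov: uniform boundedness and continuity in $\alpha$ over the compact set $A$; uniform $\delta_0$-H\"{o}lder continuity in $t$; Lipschitz continuity in $x$ for $\mu ,\sigma $ and H\"{o}lder continuity in $x$ for $f,r$; continuity and boundedness of $g$. Once this verification is complete, the lemma follows by directly invoking the cited theorem.

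The main obstacle is essentially a bookkeeping one: reconciling our formulation with Krylov's. In particular one must confirm that the simultaneous discretization of time (to the partition $\{t_{i}^{h}\}$) and of the control space (to the $h$-net $N^{h}$) is covered by Theorem 2.3 of \cite{K-pc}. The control-space discretization costs an error controlled by the uniform continuity of $\mu ,\sigma ,f,r$ in $\alpha$ on the compact set $A$, combined with standard Gronwall stability estimates for the controlled SDE, and hence vanishes as $h\to 0$; this estimate is what ultimately allows the approximation to be stated in terms of values in $N^{h}$ rather than merely in $A$.
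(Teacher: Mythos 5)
Your proposal is correct and follows essentially the same route as the paper, which offers no separate proof at all: it simply declares the lemma ``a direct consequence of Theorem 2.3 in Krylov \cite{K-pc}'', with Remark \ref{rem:cont-coef} noting that Assumption \ref{assum: Markov} is designed to match Krylov's hypotheses. Your additional bookkeeping --- the trivial inequality $u_0^h\le u_0$ from $\mathcal{A}_h\subset\mathcal{A}$, and the observation that discretizing the control set to the finite $h$-net $N^h$ costs an error controlled by uniform continuity in $\alpha$ together with Gronwall stability of the controlled SDE --- is a sensible elaboration of exactly what the paper leaves implicit.
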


\begin{rem}
\bigskip \label{rem: rate}Theorem 2.3 in \cite{K-pc} also gives a rate of
convergence for the discretization in Lemma \ref{lem:krylov}, i.e. there exists
a constant $C>0$ such that%
\begin{equation*}
\left\vert u_{0}-u_{0}^{h}\right\vert \leq Ch^{\frac{1}{3}},
\q \mbox{for all}~~0<h\leq 1.
\end{equation*}
\end{rem}

For the following statement, we introduce: 
\be\label{equation defining approximates}
\left.\ba{lll}
& v^{h}:=\inf_{\varphi \in \mathcal{U}}\mathbb{E}^{\mathbb{P}_{0}}\Big[\max_{a
\in \mathcal{D}_{h}}\F^{a,\f} \Big],~~\mbox{with}&\\
&\F^{a,\f} := R^a_{T} g(X_{T}^{a })+\int_{0}^{T}R^a_{t}f(t,a _{t},X_{t}^{a })dt 
-\int_{0}^{T} R^a_{t} \varphi _{t}(X^{a })^{\intercal}\sigma (t,a _{t},X_{t}^{a })dB_{t}. &
\ea\right.
\ee
\begin{rem}{\rm
It is noteworthy that stochastic integrals are defined in $L^2$-space, so it is in general meaningless to take the pathwise supremum of a family of stochastic integrals. However, as we mentioned before, the set $\cD_h$ is of finite elements. So there is a unique random variable in $L^2$ equal to the maximum value of the finite number of stochastic integrals, $\dbP_0$-a.s.
}
\end{rem}

The next theorem allows to recover the stochastic optimal control
problem as a limit of discretized deterministic control problems.

\begin{thm}
\label{thm: markov} Suppose Assumption \ref{assum: Markov} holds and $g$ is
bounded. Then we have%
\begin{equation*}
u_{0}=\lim_{h\rightarrow 0}v^{h}.
\end{equation*}
\end{thm}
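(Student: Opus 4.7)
The plan is to sandwich $v^h$ between two quantities converging to $u_0$ by establishing the two-sided bound $u_0^h\le v^h\le u_0$ for every $h$, and then invoking Lemma~\ref{lem:krylov}.

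For the weak direction $u_0^h\le v^h$, I would fix arbitrary $\alpha\in\mathcal{A}_h$ and $\varphi\in\mathcal{U}$ and argue pathwise. Since $\alpha$ is piecewise constant on $\{t^h_i\}$ with values in the finite set $N^h$, each sample trajectory $\alpha(\cdot,\omega)$ lies in $\mathcal{D}_h$. Pathwise uniqueness of the SDE defining $X^\alpha$, which holds under the Lipschitz condition in Assumption~\ref{assum: Markov}, then gives $X^{\alpha}(\omega)=X^{\alpha(\cdot,\omega)}(\omega)$ for $\mathbb{P}_0$-a.e.\ $\omega$, so
$$\Phi^{\alpha,\varphi}(\omega)=\Phi^{\alpha(\cdot,\omega),\varphi}(\omega)\le\max_{a\in\mathcal{D}_h}\Phi^{a,\varphi}(\omega).$$
Boundedness of $\varphi$, $\sigma$ and $R^{\alpha}$ makes the stochastic integral in $\Phi^{\alpha,\varphi}$ a true martingale, so taking expectations removes it and yields $\mathbb{E}[R^{\alpha}_T g(X^{\alpha}_T)+\int_0^T R^{\alpha}_t f(t,\alpha_t,X^{\alpha}_t)dt]\le\mathbb{E}[\max_{a\in\mathcal{D}_h}\Phi^{a,\varphi}]$. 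Taking $\sup_{\alpha\in\mathcal{A}_h}$ on the left and $\inf_{\varphi\in\mathcal{U}}$ on the right delivers $u_0^h\le v^h$.

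For the strong direction $v^h\le u_0$, I would follow the Rogers/Davis--Burstein idea and select $\varphi$ from the gradient of a smooth super-solution of the HJB equation
$$v_t+\sup_{a\in A}\Bigl[\mu^a\cdot\nabla v+\tfrac12\mathrm{tr}\bigl(\sigma^a(\sigma^a)^{\intercal}\nabla^2 v\bigr)-r^a v+f^a\Bigr]=0,\qquad v(T,\cdot)=g,$$
whose viscosity solution satisfies $v(0,x_0)=u_0$. For every $\epsilon>0$, I would construct $v^{\epsilon}\in C^{1,2}$ with bounded gradient, $v^{\epsilon}(T,\cdot)\ge g$, and $v^{\epsilon}(0,x_0)\le u_0+\epsilon$, e.g.\ by inf-convolution of $v$ followed by mollification, or through the regularization scheme of \cite{K-fd}. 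The non-anticipative functional $\varphi^{\epsilon}_t(\omega):=\nabla v^{\epsilon}(t,\omega_t)$ then belongs to $\mathcal{U}$. Applying It\^o's formula to $R^{a}_t v^{\epsilon}(t,X^{a}_t)$ for each deterministic $a\in\mathcal{D}_h$ and invoking the super-solution inequality produces, pathwise,
$$\Phi^{a,\varphi^{\epsilon}}(\omega)=R^{a}_T g(X^{a}_T)+\int_0^T R^{a}_t f(t,a_t,X^{a}_t)dt-\int_0^T R^{a}_t\nabla v^{\epsilon}(t,X^{a}_t)^{\intercal}\sigma(t,a_t,X^{a}_t)dB_t\le v^{\epsilon}(0,x_0).$$
Taking the maximum over the finite set $\mathcal{D}_h$ and expectations yields $v^h\le v^{\epsilon}(0,x_0)\le u_0+\epsilon$, and sending $\epsilon\to 0$ gives $v^h\le u_0$.

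The main obstacle is the construction of the smooth super-solution $v^{\epsilon}$. Under Assumption~\ref{assum: Markov} the value function $v$ is in general only H\"older continuous, so It\^o's formula cannot be applied to it directly; the regularization must simultaneously preserve the super-solution property, respect the terminal condition $g$, and produce a uniformly bounded gradient so that $\varphi^{\epsilon}\in\mathcal{U}$. This is precisely the regularity content of the Krylov framework referred to in Remark~\ref{rem:cont-coef}; once the approximants $v^{\epsilon}$ are in hand, combining the two steps above with Lemma~\ref{lem:krylov} concludes $u_0=\lim_{h\to 0} u_0^h\le\liminf_{h\to 0} v^h\le\limsup_{h\to 0} v^h\le u_0$, as claimed.
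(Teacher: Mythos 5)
Your overall architecture is exactly that of the paper: the sandwich $u_0^h\le v^h\le u_0$, the weak direction via the martingale property of $\int_0^\cdot R^\alpha_t\varphi_t(X^\alpha)^\intercal\sigma\,dB_t$ and the pathwise freezing $\Phi^{\alpha,\varphi}\le\max_{a\in\mathcal{D}_h}\Phi^{a,\varphi}$ for $\alpha\in\mathcal{A}_h$, the strong direction via the gradient of a mollified value function as near-optimal $\varphi$ combined with It\^o's formula and Krylov's result from \cite{K-fd} that the mollification is a classical supersolution, and finally Lemma \ref{lem:krylov}. Your weak direction is fine and in fact spells out the pathwise-uniqueness identification that the paper leaves implicit.

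The strong direction, however, has a genuine gap. You require a smooth supersolution $v^{\epsilon}$ satisfying \emph{both} $v^{\epsilon}(T,\cdot)\ge g$ globally on $\mathbb{R}^d$ \emph{and} $v^{\epsilon}(0,x_0)\le u_0+\epsilon$, so that the pathwise bound $\Phi^{a,\varphi^{\epsilon}}\le v^{\epsilon}(0,x_0)$ holds for every $a\in\mathcal{D}_h$. Under the theorem's hypotheses $g$ is only continuous and bounded, not uniformly continuous, so the mollification $u^{(\varepsilon)}(T,\cdot)$ converges to $g$ only locally uniformly; the constant $\sup_x\bigl(g(x)-u^{(\varepsilon)}(T,x)\bigr)^{+}$ you would need to add to restore global terminal domination need not vanish as $\varepsilon\to 0$ (a uniform limit of uniformly continuous functions is uniformly continuous, so no such global approximation exists). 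Your alternative suggestion of inf-convolution goes the wrong way: inf-convolution preserves supersolutions but pushes the function \emph{below} $v$, hence below $g$ at $t=T$, violating the domination you need; sup-convolution would fix the terminal side but destroys the supersolution property. The paper avoids this dead end by \emph{not} demanding terminal domination: it keeps the error term $R^a_T\bigl(g(X^a_T)-u^{(\varepsilon)}(T,X^a_T)\bigr)$ in \eqref{estime:Phi}, then estimates $\mathbb{E}^{\mathbb{P}_0}\bigl[\max_{a\in\mathcal{D}_h}\Phi^{a,\varphi^\varepsilon}\bigr]$ by splitting on the event $\{\max_{a\in\mathcal{D}_h}|X^a_T|\le L\}$, using the compact-set modulus $\rho_L(\varepsilon)\to 0$, the uniform upper bound \eqref{eq:Phibounded}, and the tail estimate $\mathbb{P}_0\bigl[\max_{a\in\mathcal{D}_h}|X^a_T|>L\bigr]\le\sum_{a\in\mathcal{D}_h}\mathbb{P}_0\bigl[|X^a_T|>L\bigr]\to 0$ as $L\to\infty$ — a step where the finiteness of $\mathcal{D}_h$ is again essential. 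To repair your argument, replace the global requirement $v^{\epsilon}(T,\cdot)\ge g$ by this compact-plus-tail estimate (or first restrict to uniformly continuous $g$ and handle general bounded continuous $g$ separately, which is not immediate since sup-norm approximation fails).
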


\begin{proof}
We first prove that $u_{0}\leq \liminf_{h\rightarrow 0}v^{h}.$ Recall $%
u_{0}^{h} $ defined in \eqref{n-pbm}. Since $R^\a, \si$ are bounded, for all $\varphi \in \mathcal{U}$ the
process $\int_{0}^{\cdot } R^\a_t \varphi _{t}(X^{\alpha })^{\intercal}\sigma (t,\alpha
_{t},X_{t}^{\alpha })dB_{t}$ is a martingale. So we have 
\begin{eqnarray*}
u_{0}^{h} &= &\sup_{\alpha \in \mathcal{A}_{h}}\mathbb{E}^{\mathbb{P}_{0}}%
\big[\F^{\a,\f} \big].
\end{eqnarray*}
Since $\F^{\a,\f} \le \max_{a\in \cD_h} \F^{a,\f}$ for all $\a\in \cA_h$, we have
\beaa
u_{0}^{h} &\le & \dbE^{\dbP_0}\big[\max_{a\in \cD_h} \F^{a,\f}\big].
\eeaa
The required result follows.

\medskip \noindent To show $u_{0}\geq \limsup_{h\rightarrow 0}v^{h}$ we
construct an explicit minimizer $\varphi ^{\ast }$. First note that under
Assumption \ref{assum: Markov}, it is easy to verify that $u_{t}$ defined as 
\begin{equation*}
u(t,x):=\sup_{\alpha \in \mathcal{A}}\mathbb{E}^{\mathbb{P}_{0}}\Big[%
\int_{t}^{T} \frac{R^\a_s}{R^\a_t}f(s,\alpha _{s},X_{s}^{\alpha })ds+\frac{R^\a_T}{R^\a_t} g(X_{T}^{\alpha })\Big|X_{t}^{\alpha }=x\Big],
\end{equation*}%
is a viscosity solution to the Dirichlet problem of the HJB equation: 
\begin{equation}  \label{HJB eq}
\left.%
\begin{array}{lll}
& -\partial _{t}u-\sup_{b \in A}\big\{ \mathcal{L}^b u + f(t,b,x) \big\} =0,\quad u_{T}=g, &  \\ 
& \mbox{where}~~\mathcal{L}^b u := \mu (t,b ,x)\cdot \partial _{x}u+\frac{1}{%
2}\mathrm{Tr}\big((\sigma \sigma ^{\intercal})(t,b ,x)\partial _{xx}^{2}u%
\big)-r(t,b ,x)u. & 
\end{array}%
\right.
\end{equation}

We next define the mollification $u^{(\varepsilon )}:=u\ast K^{(\varepsilon
)}$ of $u$, where $K$ is a smooth function with compact support in $%
(-1,0)\times O_{1}$ ($O_{1}$ is the unit ball in $\mathbb{R}^{d}$), and $%
K^{(\varepsilon )}(x):=\varepsilon ^{-n-2}K(t/\varepsilon ^{2},x/\varepsilon
)$. Clearly, $u^{(\varepsilon )}\in C_{b}^{\infty }$ and $u^{(\varepsilon )}$
converges uniformly to $u$. 
As mentioned in Remark \ref{rem:cont-coef}, Assumption \ref{assum: Markov} matches the assumptions in \cite{K-fd}, where the author proved in his Theorem 2.1 that 
\begin{center}
$u^{(\varepsilon )}$ is a
classical supersolution to the HJB equation \eqref{HJB eq}.
\end{center}
Denote 
\be\label{choice_phi}
\f^\e_t(\o) := \pa_x u^{(\e)}(t,\o_t).
\ee
Since $u^{(\varepsilon )}\in C_{b}^{\infty }$, it follows from the It\^o's formula that
\begin{multline*}
R^a_T u^{(\varepsilon )}(T, X_{T}^{a
})-u_{0}^{(\varepsilon )} ~=~\int_{0}^{T} R^a_t \big( \pa_t u^{(\e)}+\mathcal{L}^{a_t} u^{(\varepsilon)}(t,X^a_t)\big)dt \\
+\int_{0}^{T} R^a_t \f^\e_t(X^{a })^{\intercal}\sigma (t,a _{t},X_{t}^{a
})dB_{t},\quad \mbox{for all}~a \in \mathcal{D}_{h},\quad \mathbb{P}_{0}%
\mbox{-a.s.}
\end{multline*}
Then, by the definition of $\F^{a,\f^\e}$ in \eqref{equation defining approximates}, we obtain
\begin{multline*}
\F^{a,\f^\e} = R^a_T g(X^a_T) + \int_0^T R^a_t \Big( f(t,a_t,X^a_t)+  \big(\pa_t u^{(\e)} + \cL^{a_t} u^{(\e)}\big)(t,X^a_t)\Big) dt\\
 - R^a_T u^{(\varepsilon )}(T, X_{T}^{a})+u_{0}^{(\varepsilon )},\quad \mbox{for all}~a \in \mathcal{D}_{h},\quad \mathbb{P}_{0}\mbox{-a.s.}
\end{multline*}
Since $u^{(\varepsilon )}$ is a supersolution to the HJB equation \eqref{HJB eq}, it follows that
\be\label{estime:Phi}
\F^{a,\f^\e}  ~\le ~ R^a_T \big(%
g(X_{T}^{\alpha })-u^{(\varepsilon )}(T, X_{T}^{\alpha })\big)%
+u_{0}^{(\varepsilon )},\quad \mbox{for all}~a \in \mathcal{D}_{h},\quad \mathbb{P}_{0}%
\mbox{-a.s.}
\ee
By Assumption \ref{assum: Markov} and the fact that $g$ is bounded, 
\be\label{eq:Phibounded}
\mbox{$\F^{a,\f^\e}$
is uniformly bounded from above.}
\ee
 Also, it is easy to verify that the function $u$
is continuous and therefore uniformly continuous on $S_{L}:=[0,T]\times
\{|x|\leq L\}$ for any $L>0$ and that $u^{(\varepsilon )}$ converges uniformly to 
$u$ on $S_{L}$. In particular, 
\be\label{eq:conv_ueps}
\left.\ba{lll}
& u^{(\e)}_0\rightarrow u_0, &\\
&\rho _{L}\left( \varepsilon \right) :=\max_{\left\vert x\right\vert \leq
L}\left\vert g\left( x\right) -u^{\left( \varepsilon \right) }\left(T,
x\right) \right\vert  \rightarrow 0,&
\ea\right.
\q \mbox{as}~~ \e\rightarrow 0.
\ee
It follows from \eqref{estime:Phi}, \eqref{eq:Phibounded} and \eqref{eq:conv_ueps} that
\begin{eqnarray*}
\mathbb{E}^{\mathbb{P}_{0}}\Big[\max_{a \in 
\mathcal{D}_{h}} \F^{a, \f^\e}\Big] 
&=& \mathbb{E}^{\mathbb{P}_{0}}\Big[\max_{a \in \mathcal{D}_{h}} \F^{a,\f^\e };
\max_{a \in \mathcal{D}_{h}}|X_{T}^{a }|\leq L\Big]+\mathbb{E}^{\mathbb{P}_{0}}\Big[\max_{a \in 
\mathcal{D}_{h}} \F^{a,\f^\e };\max_{a \in \mathcal{D}_{h}}|X_{T}^{a
}|>L\Big] \\
&\leq & C\rho _{L}(\varepsilon )+u_{0}^{(\varepsilon )}+C\mathbb{P}_{0}\big[%
\max_{a \in \mathcal{D}_{h}}|X_{T}^{a }|>L\big],
\end{eqnarray*}%
where $C$ is a constant independent of $L$ and $\varepsilon$. Therefore 
\begin{equation*}
v^h ~\le ~ \liminf_{\e\rightarrow 0} \mathbb{E}^{\mathbb{P}_{0}}\Big[\max_{a \in 
\mathcal{D}_{h}} \F^{a, \f^\e}\Big] 
~\leq ~ u_{0}+C\mathbb{P}_{0}\big[\max_{a \in \mathcal{D}_{h}}|X_{T}^{a
}|>L\big], \q\mbox{for any}~~L>0.
\end{equation*}%
 Further, since 
\beaa
\mathbb{P}_{0}\big[\max_{a \in \mathcal{D}_{h}}|X_{T}^{a }|>L\big] 
~\le ~ \sum_{a\in \cD_h} \dbP_0\big[ |X_{T}^{a }|>L\big] ~\rightarrow ~ 0, \q \mbox{as} ~~L\rightarrow \infty,
\eeaa
 we conclude that $v^h \le u_0$. So the required inequality follows.
\end{proof}

The boundedness assumption on $g$ may be relaxed by means of a simple cut
off argument:

\begin{cor}
\label{cut off} Assume that $g$ is of polynomial growth, i.e.
$$|g(x)| \le C\big(1+|x|^p\big), \q\mbox{for some}~~ C, p \ge 0.$$
Let $M>0$ ,$g^{M}$\ a continuous compactly supported function
that agrees with $g$ on $O_{M}\subseteq \mathbb{R}^{d}$ and satisfies $%
\left\vert g^{M}\right\vert \leq \left\vert g\right\vert $. Let $v^{h,M}$
denote the approximations defined in $\left( \ref{equation defining
approximates}\right) ,$ with respect to $g^{M}$ in place of $g.$ Then we have
\begin{equation*}
\lim_{M\rightarrow 0}\left\vert u_{0}-\lim_{h\rightarrow 0}v^{h,M}\right\vert ~=~ 0.
\end{equation*}
\end{cor}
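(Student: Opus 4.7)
The plan is to reduce the Corollary to Theorem \ref{thm: markov} by approximating the problem with bounded terminal data, and then to control the error introduced by truncating $g$ using standard moment estimates for the controlled SDE.

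First I would introduce the auxiliary value function
\begin{equation*}
u_{0}^{M} := \sup_{\alpha \in \mathcal{A}}\mathbb{E}^{\mathbb{P}_{0}}\Big[\int_{0}^{T} R^\a_{t} f(t,\alpha_{t},X_{t}^{\alpha })dt + R^\a_{T} g^{M}(X_{T}^{\alpha })\Big].
\end{equation*}
Since $g^{M}$ is continuous and compactly supported, hence bounded, Theorem \ref{thm: markov} applies to the truncated payoff and yields $u_{0}^{M} = \lim_{h\rightarrow 0} v^{h,M}$. The Corollary will therefore follow once I show that $u_{0}^{M}\rightarrow u_{0}$ as $M\rightarrow \infty$.

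Next I would estimate $|u_{0}-u_{0}^{M}|$. Only the terminal term differs between the two suprema, so a standard comparison of suprema gives
\begin{equation*}
|u_{0}-u_{0}^{M}| ~\le~ \sup_{\alpha \in \mathcal{A}} \mathbb{E}^{\mathbb{P}_{0}}\big[ R^\a_{T}\,|g(X_{T}^{\alpha })-g^{M}(X_{T}^{\alpha })|\big].
\end{equation*}
By Assumption \ref{assum: Markov} the discount factor $R^{\alpha}$ is uniformly bounded by some constant. Since $g^{M}\equiv g$ on $O_{M}$ and $|g^{M}|\leq |g|$, the integrand is supported on $\{|X_{T}^{\alpha}|>M\}$ and dominated by $2C(1+|X_{T}^{\alpha}|^{p})\mathbf{1}_{\{|X_{T}^{\alpha}|>M\}}$ by the polynomial growth assumption on $g$.

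The main step, and the one I expect to be the only non-trivial point, is obtaining a tail estimate on $X_{T}^{\alpha }$ that is uniform in $\alpha \in \mathcal{A}$. This is standard: because $\mu$ and $\sigma$ are uniformly bounded by Assumption \ref{assum: Markov}, classical moment bounds for stochastic integrals (e.g.\ BDG) give a constant $K_{q}$, depending only on $q$, $T$ and the sup-norms of $\mu,\sigma$, such that $\mathbb{E}^{\mathbb{P}_{0}}[|X_{T}^{\alpha}|^{q}] \leq K_{q}$ for every $\alpha \in \mathcal{A}$ and every $q\geq 1$. Applying Cauchy--Schwarz then yields
\begin{equation*}
\sup_{\alpha \in \mathcal{A}}\mathbb{E}^{\mathbb{P}_{0}}\big[(1+|X_{T}^{\alpha}|^{p})\mathbf{1}_{\{|X_{T}^{\alpha}|>M\}}\big] \le \big(K_{2p}+1\big)^{1/2}\, \sup_{\alpha \in \mathcal{A}}\mathbb{P}_{0}\big[|X_{T}^{\alpha}|>M\big]^{1/2},
\end{equation*}
and Markov's inequality together with the uniform moment bound shows that this quantity tends to $0$ as $M\rightarrow \infty$. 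Combining these ingredients gives $|u_{0}-u_{0}^{M}|\rightarrow 0$, which together with $u_{0}^{M}=\lim_{h\rightarrow 0}v^{h,M}$ completes the proof.
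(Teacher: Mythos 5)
Your proposal is correct and follows essentially the same route as the paper: introduce the truncated value $u_{0}^{M}$, apply Theorem \ref{thm: markov} to the bounded payoff $g^{M}$, and control $|u_{0}-u_{0}^{M}|$ via a uniform-in-$\alpha$ moment bound for $X_{T}^{\alpha}$ coming from the boundedness of $\mu$ and $\sigma$. The only (immaterial) difference is the tail estimate: you use Cauchy--Schwarz with $2p$-moments plus Markov's inequality, whereas the paper bounds $\mathbb{E}\big[|X_{T}^{\alpha}|^{p};|X_{T}^{\alpha}|\geq M\big]$ directly by $\mathbb{E}\big[|X_{T}^{\alpha}|^{p+1}\big]/M$.
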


\begin{proof}
Define $u_0^M$ as in \eqref{u0} by using the approximation $g^M$, i.e.
\beaa
u^M_{0} &:=&\sup_{\alpha \in \mathcal{A}}\mathbb{E}^{\mathbb{P}_{0}}\Big[%
\int_{0}^{T} R^\a_{t}
f(t,\alpha_{t},X_{t}^{\alpha })dt + R^\a_{T} g^M(X_{T}^{\alpha })\Big].
\eeaa
 By Theorem %
\ref{thm: markov}, we know that $u_0^M=\lim_{h\rightarrow 0} v^{h,M}$.

Further, we have 
\begin{eqnarray*}
|u_0-u_0^M| &\le & C\sup_{\alpha\in \mathcal{A}}\mathbb{E}^{\mathbb{P}_0}%
\Big[g(X^\alpha_T)-g^M(X^\alpha_T)\Big] \\
&\le & C\sup_{\alpha\in\mathcal{A}}\mathbb{E}^{\mathbb{P}_0}\Big[%
|X^\alpha_T|^p+1; |X^\alpha_T|\ge M\Big].
\end{eqnarray*}
Assume $M\ge 1$. Then we obtain
\be\label{diffuM}
|u_0-u_0^M| ~\le ~ C\sup_{\alpha\in\mathcal{A}}\mathbb{E}^{\mathbb{P}_0}\Big[%
|X^\alpha_T|^p; |X^\alpha_T|\ge M\Big]
~\le ~ C \sup_{\alpha\in\mathcal{A}}\mathbb{E}^{\mathbb{P}_0}\Big[%
\frac{|X^\alpha_T|^{p+1}}{M} \Big].
\ee
Since $\mu,\si$ are both bounded, we have
\be\label{estimate:momentX}
\dbE^{\dbP_0}\Big[|X^\alpha_T|^{p+1}\Big] 
~ \le ~ C \dbE^{\dbP_0}\Big[ \big|\int_0^T \mu(t,\a_t, X^\a_t)dt \big|^{p+1}
	+ \big|\int_0^T \si(t,\a_t, X^\a_t)d B_t \big|^{p+1} \Big]
~ \le ~ CT.
\ee
It follows from \eqref{diffuM} and \eqref{estimate:momentX} that
\beaa
\lim_{M\rightarrow\infty} |u_0-u_0^M| &=& 0.
\eeaa
 The proof is completed.
\end{proof}

We conclude the section with two remarks, both relevant to the numerical
simulation of the approximation derived in Theorem \ref{thm: markov}.

\begin{rem}\label{rem:1}
To approximate $v^{h}$ in our numerical examples we will as in
the proof of Theorem \ref{thm: markov} use fixed functions $\varphi
^{\ast }$ for the minimization. The definition \eqref{choice_phi} makes it clear that the natural
choice for these minimizers are (the numerical approximations of) the
function $\partial _{x}u$. Note that these approximations are readily
available from the numerical scheme in \cite{guy} that is used to compute the
complementary lower bounds.
\end{rem}

\begin{rem}
In the proof of Theorem \ref{thm: markov} we showed that $u_{0}^{h}\leq
v^{h}\leq u_{0}.$ It therefore follows from Remark \ref{rem: rate} that
there exists a constant $C>0$ such that%
\begin{equation*}
\left\vert u_{0}-v^{h}\right\vert \leq Ch^{\frac{1}{3}}
\end{equation*}%
for all $0<h\leq 1\wedge T.$
\end{rem}

\section{Some extensions}

\subsection{The non-Markovian case\label{sec:3}}

In our first extension we consider stochastic control problems of the form 
\begin{equation*}
u_{0}=\sup_{\alpha \in \mathcal{A}}\mathbb{E}^{\mathbb{P}_{0}}\Big[%
g(X_{T\wedge \cdot }^{\alpha })\Big],
\end{equation*}%
where $X^{\alpha }$ is a $d-$dimensional diffusion defined by $X^{\alpha
}:=\int_{0}^{\cdot }\mu (t,\alpha _{t})dt+\int_{0}^{\cdot }\sigma (t,\alpha
_{t})dB_{t}$. Note that in this setting $\mu $ and $\sigma $ only depend on $%
\alpha $ and $t,$ but the payoff function $g$ is path dependent.

\begin{rem}{\rm
The arguments in this subsection are based on the "frozen-path"
approach developed in Ekren, Touzi and Zhang \cite{ETZ}. In order to apply their approach, we have restricted the class of diffusions $X^{\alpha }$ we consider, compared to the Markovian control problem.
}
\end{rem}

Writing $\mathbb{P}_{\alpha }:=\mathbb{P}_{0}\circ (X^{\alpha })^{-1},$ we
have 
\begin{equation*}
u_{0}=\sup_{\alpha \in \mathcal{A}}\mathbb{E}^{\mathbb{P}_{\alpha }}\Big[%
g(B_{T\wedge \cdot })\Big].
\end{equation*}%
Throughout this subsection we will impose the following regularity
assumptions.

\begin{assum}\label{assum:non Markov}
The functions $\mu,\si: \dbR^+\times A \rightarrow E$ ($E$ is the respective metric space) and $g:\O^d \rightarrow \dbR$ are uniformly bounded such that
\begin{itemize}
\item $\mu,\si$ are continuous in $\a$;

\item $\mu,\si$ are $\d_0$-H\"older continuous in $t$, for some constant $\d_0\in (0,1]$;

\item $g$ is uniformly continuous.  
\end{itemize}
\end{assum}\label{assum: non-Markov}

\begin{exa}
Arguing as in Corollary \ref{cut off} we may also consider unbounded
payoffs. Hence, possible path-dependent payoffs that fit our framework
include e.g. the maximum $\max_{s\in \left[ 0,T\right] }\omega _{s}$ and
Asian options $\frac{1}{T}\int_{0}^{T}\omega _{s}ds.$
\end{exa}

Let 
$$\Lambda_{\varepsilon }:=\big\{t_{0}=0,t_{1},t_{2},\cdots ,t_{n}=T\big\}$$
be a partition of $[0,T]$ with mesh bounded above by $\varepsilon $. For $%
k\leq n$ and $\pi _{k}=(x_{1}=0,x_{2},\cdots ,x_{k})\in \mathbb{R}^{d\times
k}$, denote by $\Gamma _{\varepsilon }^{\Lambda_{\varepsilon },k}(\pi _{k})$
the path generated by the linear interpolation of the points $%
\{(t_{i},x_{i})\}_{0\leq i\leq k}$. Where no confusion arises with regards
to the underlying partition we will in the following drop the superscript $%
\Lambda_{\varepsilon }$ and write $\Gamma _{\varepsilon }^{k}(\pi _{k})$ in
place of $\Gamma _{\varepsilon }^{\Lambda_{\varepsilon },k}(\pi _{k}),$ but
it must be emphasized that the entire analysis in this subsection is carried
out with a fixed but arbitrary partition $\Lambda_{\varepsilon }$ in mind.
Define the interpolation approximation of $g$ by 
\begin{equation*}
g^{\varepsilon }(\pi _{n}):=g\Big(\Gamma _{\varepsilon }^{n}(\pi _{n})\Big)
\end{equation*}%
and define an approximation of the value function by letting 
\begin{equation*}
\theta _{0}^{\varepsilon }:=\sup_{\alpha \in \mathcal{A}}\mathbb{E}^{\mathbb{%
P}_{\alpha }}\Big[g^{\varepsilon }\big((B_{t_{i}})_{0\leq i\leq n}\big)\Big].
\end{equation*}%
The following lemma justifies the use of linear interpolation for
approximating dependent payoff.

\begin{lem}
\label{lem: th eps u} Under Assumption \ref{assum: non-Markov}, we have
\begin{equation*}
\lim_{\varepsilon \rightarrow 0}\theta _{0}^{\varepsilon }=u_{0}.
\end{equation*}
\end{lem}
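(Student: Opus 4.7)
The plan is to reduce the lemma to controlling, uniformly in $\alpha$, the $\sup$-norm distance between $X^\alpha$ and its piecewise-linear interpolation $\Gamma^n_\eps((X^\alpha_{t_i}))$ at the mesh points, and then to invoke the uniform continuity of $g$. The first step is to transfer everything back to the physical measure $\mathbb{P}_0$: since $\mathbb{P}_\alpha = \mathbb{P}_0\circ (X^\alpha)^{-1}$, one has
$$u_0 = \sup_{\alpha\in\mathcal{A}}\mathbb{E}^{\mathbb{P}_0}\bigl[g(X^\alpha_{T\wedge\cdot})\bigr],\qquad \theta_0^\eps = \sup_{\alpha\in\mathcal{A}}\mathbb{E}^{\mathbb{P}_0}\Bigl[g\bigl(\Gamma^n_\eps((X^\alpha_{t_i})_{0\le i\le n})\bigr)\Bigr].$$
The elementary inequality $|\sup_\alpha A_\alpha - \sup_\alpha B_\alpha|\le \sup_\alpha|A_\alpha - B_\alpha|$ then yields
$$|u_0 - \theta^\eps_0|\;\le\;\sup_{\alpha\in\mathcal{A}}\mathbb{E}^{\mathbb{P}_0}\Bigl|g(X^\alpha_{T\wedge\cdot}) - g\bigl(\Gamma^n_\eps((X^\alpha_{t_i}))\bigr)\Bigr|.$$

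Next, for any continuous $\omega$ and any $t\in[t_i,t_{i+1}]$ the linear interpolant satisfies $|\omega_t-\Gamma^n_\eps((\omega_{t_j}))_t|\le 2\,W(\omega,\eps)$, where
$$W(\omega,\eps):= \sup\bigl\{|\omega_s-\omega_u| : s,u\in[0,T],\,|s-u|\le\eps\bigr\}$$
denotes the $\eps$-modulus of continuity. Since Assumption \ref{assum: non-Markov} asserts that $g$ is bounded and uniformly continuous with respect to $\|\cdot\|_\infty$, it admits a bounded nondecreasing modulus of continuity $\rho_g$ with $\rho_g(0^+)=0$, and one obtains pathwise
$$\bigl|g(X^\alpha_{T\wedge\cdot}) - g(\Gamma^n_\eps((X^\alpha_{t_i})))\bigr|\;\le\;\rho_g\bigl(2\,W(X^\alpha,\eps)\bigr).$$

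The remaining task is to control $W(X^\alpha,\eps)$ uniformly in $\alpha$. Because $\mu$ and $\sigma$ are bounded under Assumption \ref{assum: non-Markov}, Burkholder-Davis-Gundy yields $\mathbb{E}^{\mathbb{P}_0}[|X^\alpha_t - X^\alpha_s|^4]\le C|t-s|^2$ for all $s,t\in[0,T]$ and all $\alpha\in\mathcal{A}$, with $C$ depending only on $\|\mu\|_\infty$ and $\|\sigma\|_\infty$. A standard Kolmogorov-Centsov argument then produces a bound $\mathbb{E}^{\mathbb{P}_0}[W(X^\alpha,\eps)]\le C'\eps^\gamma$ for some $\gamma>0$, uniformly in $\alpha$. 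Since $\rho_g$ is bounded with $\rho_g(r)\downarrow 0$ as $r\downarrow 0$, dominated convergence applied to $\rho_g(2\,W(X^\alpha,\eps))$ gives $\sup_\alpha \mathbb{E}^{\mathbb{P}_0}[\rho_g(2\,W(X^\alpha,\eps))]\to 0$ as $\eps\to 0$, which together with the first display completes the proof.

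The main obstacle is securing uniformity in $\alpha$ at every step. It is clean here because every constant depends only on $\|\mu\|_\infty$, $\|\sigma\|_\infty$ and the fixed modulus of continuity of $g$, rather than on the specific control; equivalently, the family of laws $\{\mathbb{P}_\alpha\}_{\alpha\in\mathcal{A}}$ is tight in $C([0,T],\mathbb{R}^d)$, and this tightness is precisely what powers the Kolmogorov-Centsov step.
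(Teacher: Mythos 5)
Your proof is correct in substance and follows the same skeleton as the paper's: transfer to a single measure, bound the interpolation error pathwise by the $\varepsilon$-modulus of continuity of the controlled path, apply the modulus of continuity of $g$, and control the expected path modulus uniformly in $\alpha$ using only the boundedness of $\mu$ and $\sigma$. The one genuinely different ingredient is the key estimate: the paper simply cites Theorem 1 of Fisher and Nappo, which gives $\mathbb{E}^{\mathbb{P}_\alpha}[\mathbf{w}_B(\varepsilon,T)]\le C(\varepsilon\ln\frac{2T}{\varepsilon})^{1/2}$ with $C$ depending only on the bounds of $\mu,\sigma$, whereas you rederive a weaker but sufficient bound $\sup_\alpha\mathbb{E}[W(X^\alpha,\varepsilon)]\le C'\varepsilon^{\gamma}$ from scratch via BDG and a quantitative Kolmogorov--\v{C}entsov argument. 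Your route is self-contained and makes the uniformity in $\alpha$ explicit (the constants depend only on $\|\mu\|_\infty$, $\|\sigma\|_\infty$, $p$, and the exponent), at the cost of a suboptimal rate; the paper's route is shorter and sharper but rests on an external reference. Incidentally, the factor $2$ in your interpolation bound can be dropped: writing $\Gamma_t=\lambda\omega_{t_i}+(1-\lambda)\omega_{t_{i+1}}$ gives $|\omega_t-\Gamma_t|\le\lambda|\omega_t-\omega_{t_i}|+(1-\lambda)|\omega_t-\omega_{t_{i+1}}|\le W(\omega,\varepsilon)$, though this is harmless.

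One step as stated is not quite right: dominated convergence is applied for each fixed $\alpha$ separately and cannot by itself yield $\sup_\alpha\mathbb{E}^{\mathbb{P}_0}[\rho_g(2W(X^\alpha,\varepsilon))]\to 0$, since there is no single sequence of random variables indexed by $\varepsilon$ across all $\alpha$. The gap is trivially repaired using the uniform moment bound you already have: either take $\rho_g$ concave (as the paper does) and apply Jensen to get $\sup_\alpha\mathbb{E}[\rho_g(2W)]\le\rho_g\bigl(2\sup_\alpha\mathbb{E}[W]\bigr)\le\rho_g(2C'\varepsilon^\gamma)$, or split via Markov's inequality, $\mathbb{E}[\rho_g(2W)]\le\rho_g(2\delta)+\|\rho_g\|_\infty\,C'\varepsilon^\gamma/\delta$, and send $\varepsilon\to 0$ then $\delta\to 0$. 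With that one-line fix your argument is complete.
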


\begin{proof}
Recall that $g$ is uniformly continuous. Let $\rho $ be a modulus of
continuity of $g$. If necessary, we may choose $\rho $ to be concave (by taking the concave envelop).
Further, we define 
\begin{equation*}
\mathbf{w}_{B}(\varepsilon ,T):=\sup_{s,t\leq T;|s-t|\leq \varepsilon
}|B_{s}-B_{t}|.
\end{equation*}%
Clearly, we have 
\beaa
|\th^\e_0 - u_0| &=&\Big|\sup_{\alpha \in \mathcal{A}}\mathbb{E}^{\mathbb{P}_{\alpha }}\big[%
g^{\varepsilon }\big((B_{t_{i}})_{0\leq i\leq n}\big)\big]-\sup_{\alpha \in 
\mathcal{A}}\mathbb{E}^{\mathbb{P}_{\alpha }}\big[g(B_{T\wedge \cdot })\big]%
\Big|\\
&\leq & \sup_{\alpha \in \mathcal{A}}\mathbb{E}^{\mathbb{P}_{\alpha }}\big[%
\rho \big(\mathbf{w}_{B}(\varepsilon ,T)\big)\big]
~\leq ~ \rho \Big(%
\sup_{\alpha \in \mathcal{A}}\mathbb{E}^{\mathbb{P}_{\alpha }}\big[\mathbf{w}%
_{B}(\varepsilon ,T)\big]\Big).
\eeaa 
It is proved in Theorem 1 in Fisher and Nappo \cite{FN} that 
\beaa
\mathbb{E}^{\mathbb{P}_{\alpha }}\big[\mathbf{w}_{B}(\varepsilon ,T)\big]
~\le ~ C \Big(\e \ln \frac{2T}{\e}\Big)^\frac12,
\eeaa
where $C$ is a constant only dependent on the bound of $\mu$ and $\si$.
Thus,
\beaa
\lim_{\e\rightarrow 0}\sup_{\alpha \in \mathcal{A}}\mathbb{E}^{\mathbb{P}_{\alpha }}\big[\mathbf{w}_{B}(\varepsilon ,T)\big] &=& 0. 
\eeaa
The proof is completed.
\end{proof}

We next define the controlled diffusion with time-shifted coefficients by
setting 
\begin{equation*}
X^{\alpha ,t}:=\int_{0}^{s}\mu (t+r,\alpha _{r})dr+\int_{0}^{s}\sigma
(t+r,\alpha _{r})dB_{r},~s\in \lbrack 0,T-t],~\mathbb{P}_{0}\mbox{-a.s.},
\end{equation*}%
and the corresponding law: 
\begin{equation*}
\mathbb{P}_{\alpha }^{t}:=\mathbb{P}_{0}\circ (X^{\alpha ,t})^{-1}.
\end{equation*}%
Further,\ for $1\leq k\leq n-2$ let 
$$\eta _{k}:=t_{k+1}-t_{k},$$
 and define
recursively a family of stochastic control problems: 
\bea \label{def:th_eps}
& \theta ^{\varepsilon }(\pi _{n-1};t,x):=\sup_{\alpha \in \mathcal{A}}%
\mathbb{E}^{\mathbb{P}_{\alpha }^{t_{n-1}+t}}\Big[g^{\varepsilon }\big((\pi
_{n-1},x_{n-1}+x+B_{\eta _{n-1}-t})\big)\Big],\quad t\in \lbrack 0,\eta
_{n-1}),~x\in \mathbb{R}^{d} & \notag \\ 
& \theta ^{\varepsilon }(\pi _{k};t,x):=\sup_{\alpha \in \mathcal{A}}\mathbb{%
E}^{\mathbb{P}_{\alpha }^{t_{k}+t}}\Big[\theta ^{\varepsilon }\big((\pi
_{k},x_{k}+x+B_{\eta _{k}-t}),0,0\big)\Big],\quad t\in \lbrack 0,\eta
_{k}),~x\in \mathbb{R}^{d}. & 
\eea
Clearly, $\theta ^{\varepsilon }(0;0,0)=\theta _{0}^{\varepsilon }$.
\begin{rem}{\rm
By freezing the path $\pi_k$, we get the value function $\th^\e(\pi_k;\cd,\cd)$ of a {\it Markovian} stochastic control problem on the small interval $[0,\eta_k)$. This will allow us to apply the PDE tools which played a key role in proving the dual form in the previous section.
}
\end{rem}

\begin{lem}
\label{lem: th^h continuous} Fix $\varepsilon >0.$ The function $\theta
^{\varepsilon }(\pi ;t,x)$ is Borel-measurable in all the arguments and
uniformly continuous in $(t,x)$ uniformly in $\pi $.
\end{lem}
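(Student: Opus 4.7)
\medskip

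\noindent\textbf{Proof proposal.} The plan is to prove by backward induction on $k\in\{n-1,n-2,\ldots,1\}$ the stronger statement that $(\pi_{k};t,x)\mapsto\theta^{\varepsilon}(\pi_{k};t,x)$ is \emph{jointly} uniformly continuous on $\mathbb{R}^{d\times k}\times[0,\eta_{k})\times\mathbb{R}^{d}$. Borel-measurability in all variables then follows from joint continuity, and uniform continuity in $(t,x)$ uniformly in $\pi$ is an immediate specialisation. The scaffolding is the same at each level; only the ``payoff'' sitting inside the expectation changes, from $g^{\varepsilon}$ at the terminal step to $\theta^{\varepsilon}(\cdot;0,0)$ at each earlier step.

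For the base case $k=n-1$, I would first observe that the linear interpolation map $\Gamma_{\varepsilon}^{n}\colon\mathbb{R}^{d\times n}\to C([0,T],\mathbb{R}^{d})$ is Lipschitz in sup-norm (the difference of two interpolated paths is dominated by the maximal nodewise difference). Combined with the uniform continuity of $g$ on $\Omega^{d}$, this produces a modulus $\rho^{\varepsilon}$ such that $|g^{\varepsilon}(\pi_{n})-g^{\varepsilon}(\pi_{n}')|\leq\rho^{\varepsilon}(|\pi_{n}-\pi_{n}'|)$. Rewriting the definition using the law of $X^{\alpha,t_{n-1}+t}$ under $\mathbb{P}_{0}$,
$$
\theta^{\varepsilon}(\pi_{n-1};t,x)=\sup_{\alpha\in\mathcal{A}}\mathbb{E}^{\mathbb{P}_{0}}\Big[g^{\varepsilon}\big(\pi_{n-1},\,x_{n-1}+x+X^{\alpha,t_{n-1}+t}_{\eta_{n-1}-t}\big)\Big],
$$
joint continuity in $(\pi_{n-1},x)$ is immediate because both perturbations act pointwise on the argument of $g^{\varepsilon}$ and suprema preserve moduli. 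For the $t$-variable, I would establish
$$
\sup_{\alpha\in\mathcal{A}}\mathbb{E}^{\mathbb{P}_{0}}\Big[\big|X^{\alpha,t_{n-1}+t}_{\eta_{n-1}-t}-X^{\alpha,t_{n-1}+t'}_{\eta_{n-1}-t'}\big|\Big]\leq C\bigl(|t-t'|^{\delta_{0}}+|t-t'|^{1/2}\bigr),
$$
by splitting the difference into a ``common-horizon'' piece on $[0,\eta_{n-1}-(t\vee t')]$ (handled by the $\delta_{0}$-H\"{o}lder continuity of $\mu,\sigma$ in time together with It\^{o} isometry) and a ``short-tail'' piece of length $|t-t'|$ (handled by the boundedness of $\mu,\sigma$ and It\^{o} isometry). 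Inserting this into the expectation and using the uniform modulus of $g^{\varepsilon}$ produces the required $t$-modulus, uniformly in $\alpha$.

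For the inductive step from $k+1$ down to $k$, I would set $\phi(\pi_{k+1}):=\theta^{\varepsilon}(\pi_{k+1};0,0)$, which by the inductive hypothesis is uniformly continuous on $\mathbb{R}^{d\times(k+1)}$ with some modulus $\rho_{k+1}$. The recursion \eqref{def:th_eps} then reads
$$
\theta^{\varepsilon}(\pi_{k};t,x)=\sup_{\alpha\in\mathcal{A}}\mathbb{E}^{\mathbb{P}_{0}}\Big[\phi\big(\pi_{k},\,x_{k}+x+X^{\alpha,t_{k}+t}_{\eta_{k}-t}\big)\Big],
$$
and exactly the same three-step argument as in the base case (with $g^{\varepsilon}$ replaced by $\phi$ and $\rho^{\varepsilon}$ by $\rho_{k+1}$) yields joint uniform continuity at level $k$. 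Since $k$ ranges over a finite set, no control of the modulus uniformly in $k$ is needed.

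The main obstacle, and the only point requiring genuine analytic work, is the $t$-continuity: the shift $t\mapsto t+h$ simultaneously alters the starting time of the time-shifted diffusion (through the first argument of $\mu$ and $\sigma$) and its terminal time $\eta_{k}-t$. This forces the split estimate described above, and the decisive feature is that both components of the bound are uniform in the control $\alpha$, which is what lets them survive passage through the supremum and allows the induction to close.
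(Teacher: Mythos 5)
Your proposal is correct, but your treatment of the $t$-variable is genuinely different from the paper's. For the $(\pi,x)$-continuity both arguments coincide: a backward recursion in $k$ in which the modulus of $g^{\varepsilon}$ (coming from the Lipschitz property of the interpolation map) is propagated through $\sup_{\alpha}\mathbb{E}$; the paper even observes that the \emph{same} modulus $\rho^{\varepsilon}$ survives unchanged at every level, whereas you allow a new modulus $\rho_{k+1}$ at each step, which is harmless since $n$ is finite. The real divergence is the $t$-continuity. The paper invokes the dynamic programming principle (citing Nutz and van Handel): for $t^{0}<t^{1}$ it writes $\theta^{\varepsilon}(\pi_{k};t^{0},x)=\sup_{\alpha}\mathbb{E}^{\mathbb{P}_{\alpha}^{t_{k}+t^{0}}}\big[\theta^{\varepsilon}(\pi_{k};t^{1},x+B_{t^{1}-t^{0}})\big]$, thereby converting a time increment into a spatial increment $|B_{t^{1}-t^{0}}|$ of small expectation, so that the $x$-modulus does all the work; no time-regularity of $\mu,\sigma$ is used in this step, only boundedness. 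You instead avoid the DPP entirely and couple the two time-shifted diffusions $X^{\alpha,t_{k}+t}$ and $X^{\alpha,t_{k}+t'}$ under the same control and the same Brownian motion, splitting into a common-horizon piece controlled by the $\delta_{0}$-H\"older continuity of the coefficients in time (via It\^o isometry) and a short-tail piece controlled by boundedness. This is legitimate precisely because in this section $\mu,\sigma$ do not depend on $x$, so the coupling estimate needs no Gronwall argument; with state-dependent coefficients your route would require a flow estimate while the paper's DPP route would be unaffected. In short, your argument is more elementary and self-contained (no appeal to the DPP machinery) at the price of consuming the H\"older-in-$t$ assumption and the $x$-independence of the coefficients; the paper's argument is softer on the coefficients but rests on the DPP. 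Two small points to tighten: you should state that the moduli are taken increasing and concave so that $\mathbb{E}[\rho(\cdot)]\leq\rho(\mathbb{E}[\cdot])$ (the paper does this explicitly), and note that joint continuity indeed yields the Borel-measurability claim, which your formulation actually delivers slightly more directly than the paper's statement-by-statement estimates.
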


\begin{proof}
It follows from the uniform continuity of $g$ and the fact that
interpolation with respect to a partition $\Lambda_{\varepsilon }$ is a
Lipschitz function (in this case from $\mathbb{R}^{n\times d}$ into the
continuous functions), that $g^{\varepsilon }$ is also uniformly continuous.
Denote by $\rho ^{\varepsilon }$ a modulus of continuity of $g^{\varepsilon
} $, chosen to be increasing and concave if necessary. For any $\pi
_{n-1},\pi _{n-1}^{\prime }\in \mathbb{R}^{\left( n-1\right) \times d}$,
given $t\in \lbrack 0,\eta _{n-1}]$, $x,x^{\prime }\in \mathbb{R}^{d}$, we
have 
\begin{eqnarray*}
&&|\theta ^{\varepsilon }(\pi _{n-1};t,x)-\theta ^{\varepsilon }(\pi
_{n-1}^{\prime };t,x^{\prime })| \\
&\leq &\sup_{\alpha \in \mathcal{A}}\mathbb{E}^{\mathbb{P}_{\alpha
}^{t_{n-1}+t}}\Big[\Big|g^{\varepsilon }\big((\pi _{n-1},x_{n-1}+x+B_{\eta
_{n-1}-t})\big)-g^{\varepsilon }\big((\pi _{n-1}^{\prime },x_{n-1}+x^{\prime
}+B_{\eta _{n-1}-t})\big)\Big|\Big] \\
&\leq &\rho ^{\varepsilon }(|(\pi _{n-1},x)-(\pi _{n-1}^{\prime },x^{\prime
})|).
\end{eqnarray*}%
Similarly, for any $k<n-1$ and $\pi _{k},\pi _{k}^{\prime }\in \mathbb{R}%
^{k\times d}$, given $t\in \lbrack 0,\eta _{k}]$, $x,x^{\prime }\in \mathbb{R%
}^{d}$, we have 
\bea \label{eq: diff x}
&  & |\theta ^{\varepsilon }(\pi _{k};t,x)-\theta ^{\varepsilon }(\pi
_{k}^{\prime };t,x^{\prime })| \notag \\ 
& \leq & \sup_{\alpha \in \mathcal{A}}\mathbb{E}^{\mathbb{P}_{\alpha
}^{t_{k}+t}}\Big[\Big|\theta ^{\varepsilon }\big((\pi _{k},x_{k}+x+B_{\eta
_{k}-t}),0,0\big)-\theta ^{\varepsilon }\big((\pi _{k}^{\prime
},x_{k}+x^{\prime }+B_{\eta _{k}-t}),0,0\big)\Big|\Big] \notag\\ 
& \leq & \rho ^{\varepsilon }(|(\pi _{k},x)-(\pi _{k}^{\prime },x^{\prime
})|).% 
\eea
For $0\leq t^{0}<t^{1}\leq \eta _{k}$, it follows from the dynamic
programming principle (for a general theory on the dynamic programming principle for sublinear expectations, we refer to Nutz and Van Handel \cite{NvH}) that 
\begin{equation}
\theta ^{\varepsilon }(\pi _{k};t^{0},x)=\sup_{\alpha \in \mathcal{A}}%
\mathbb{E}^{\mathbb{P}_{\alpha }^{t_{k}+t^{0}}}\Big[\theta ^{\varepsilon
}(\pi _{k};t^{1},x+B_{t^{1}-t^{0}}))\Big]  \label{dpp}
\end{equation}%
and \eqref{dpp} and \eqref{eq: diff x} we deduce that 
\bea \label{eq: diff t 1}
|\theta ^{\varepsilon }(\pi _{k};t^{0},x)-\theta ^{\varepsilon }(\pi
_{k};t^{1},x)| & \leq & \sup_{\alpha \in \mathcal{A}}\mathbb{E}^{\mathbb{P}%
_{\alpha }^{t_{k}+t^{0}}}\Big[\Big|\theta ^{\varepsilon }(\pi
_{k};t^{1},x+B_{t^{1}-t^{0}}))-\theta ^{\varepsilon }(\pi _{k};t^{1},x)\Big|%
\Big] \notag \\ 
& \leq & \sup_{\alpha \in \mathcal{A}}\mathbb{E}^{\mathbb{P}_{\alpha
}^{t_{k}+t^{0}}}\big[\rho ^{\varepsilon }(|B_{t^{1}-t^{0}}|)\big] \notag \\ 
& \leq & \rho ^{\varepsilon }\Big(\sup_{\alpha \in \mathcal{A}}\mathbb{E}^{%
\mathbb{P}_{\alpha }^{t_{k}+t^{0}}}\big[|B_{t^{1}-t^{0}}|\big]\Big).
\eea
Similar to \eqref{estimate:momentX}, we have the estimate:
\begin{equation}\label{eq: diff t 2}
\sup_{\alpha \in \mathcal{A}}\mathbb{E}^{\mathbb{P}_{\alpha }^{t_{k}+t^{0}}}%
\Big[|B_{t^{1}-t^{0}}|\Big]
~=~ \sup_{\alpha \in \mathcal{A}}\mathbb{E}^{\mathbb{P}_0}\Big[|X^{\a,t_{k}+t^{0}}_{t^{1}-t^{0}}|\Big]
~\leq ~ C \big(t^1-t^0 \big),
\end{equation}%
where $C$ is a constant only dependent on the bound of $\mu$ and $\si$. It follows from 
\eqref{eq: diff t 1} and \eqref{eq: diff t 2} that 
\beaa
|\theta ^{\varepsilon }(\pi _{k};t^{0},x)-\theta ^{\varepsilon }(\pi
_{k};t^{1},x)|~\leq ~\rho ^{\varepsilon }\Big(C(t^1-t^0)\Big)~.
\eeaa
Hence, combining \eqref{eq: diff x} and \eqref{eq: diff t 2} we conclude
that $\theta ^{\varepsilon }(\pi _{k};t,x)$ is uniformly continuous in $%
(t,x) $ uniformly in $\pi _{k}$.
\end{proof}

The functions $\theta ^{\varepsilon }(\pi _{k};\cdot ,\cdot )$ are defined
as the value functions of stochastic control problems, and one can easily
check that they are viscosity solutions to the corresponding
Hamilton-Jacobi-Bellman equations. For $k=1,\ldots ,n-1,$ we define a family
of PDEs by letting 
\begin{equation}
\left. 
\begin{array}{lll}
& \quad \quad -\mathbf{L}^{k}\theta ~=~ 0,\quad \mbox{on}~~[0,\eta _{k})\otimes 
\mathbb{R}^{d}, ~\mbox{where}&  \\ 
& \mathbf{L}^{k}\theta ~:=~ \partial _{t}\theta + \sup_{b \in
A}\Big\{\mu \big(t_{k}+\cdot , b \big)\cdot \partial _{x}\theta +\frac{1}{2}\mathrm{Tr}\big((\sigma \sigma ^{\intercal})(t_{k}+\cdot , b
)\partial _{xx}^{2}\theta \big)\Big\}. & 
\end{array}%
\right.  \label{pathPDE}
\end{equation}%
The following proposition links the stochastic control problems with the PDE
and applies, analogous to the Markovian case, a mollification argument.

\begin{prop}
\label{prop u eps} There exists a function $u^{(\varepsilon )}:(\pi
,t,x)\mapsto \mathbb{R}$ such that $u^{(\varepsilon )}(0,0,0)=\theta
_{0}^{\varepsilon }+\varepsilon $ and for all $\pi _{k}$, $u^{(\varepsilon )}(\pi _{k};\cdot ,\cdot )$ is a classical
supersolution to the PDE $\left( \ref{pathPDE}\right) $ and the
boundary condition: 
\begin{equation*}
\left. 
\begin{array}{lll}
u^{(\varepsilon )}(\pi _{k};\eta _{k},x)=u^{(\varepsilon )}\big((\pi
_{k},x);0,0\big),\quad \quad \mbox{if}~k<n-1; &  &  \\ 
u^{(\varepsilon )}(\pi _{k};\eta _{k},x)\geq g^{\varepsilon }\big((\pi
_{k},x)\big),\quad \quad \quad \quad \mbox{if}~k=n-1. &  & 
\end{array}%
\right.
\end{equation*}
\end{prop}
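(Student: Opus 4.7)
The plan is to mimic the mollification argument from the proof of Theorem \ref{thm: markov}, slice by slice, in a backward recursion on $k$ from $n-1$ down to $1$. With $\pi_k$ frozen, the problem defining $\theta^{\varepsilon}(\pi_k;\cdot,\cdot)$ is genuinely Markovian on the finite horizon $[0,\eta_k]$, so the HJB viscosity characterization together with Krylov's classical supersolution construction from \cite{K-fd} (the same one used to build $u^{(\varepsilon)}$ in the proof of Theorem \ref{thm: markov}) is available on each slice.

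For the base case $k=n-1$, the value $\theta^{\varepsilon}(\pi_{n-1};\cdot,\cdot)$ is a bounded viscosity solution of $\mathbf{L}^{n-1}\theta=0$ on $[0,\eta_{n-1})\times \mathbb{R}^d$ with terminal $g^{\varepsilon}((\pi_{n-1},\cdot))$. By Assumption \ref{assum: non-Markov} the coefficients $\mu(t_{n-1}+\cdot,\cdot)$ and $\sigma(t_{n-1}+\cdot,\cdot)$ satisfy the hypotheses of \cite{K-fd}, so the mollification $\theta^{\varepsilon,(\delta_{n-1})}(\pi_{n-1};\cdot,\cdot)$ is a classical supersolution which, by Lemma \ref{lem: th^h continuous}, converges uniformly in $(t,x)$, uniformly in $\pi_{n-1}$, to $\theta^{\varepsilon}$ as $\delta_{n-1}\to 0$. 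I would then set $u^{(\varepsilon)}(\pi_{n-1};t,x):=\theta^{\varepsilon,(\delta_{n-1})}(\pi_{n-1};t,x)+c_{n-1}$ with $c_{n-1}\in(0,\varepsilon/(2n)]$ chosen larger than the mollification error, so that $u^{(\varepsilon)}(\pi_{n-1};\eta_{n-1},x)\ge g^{\varepsilon}((\pi_{n-1},x))$ is automatic; the supersolution property is preserved because the PDE has no zeroth-order term.

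For the inductive step, assuming $u^{(\varepsilon)}(\pi_j;\cdot,\cdot)$ has been constructed for $j=k+1,\ldots,n-1$ with $(\pi_k,x)\mapsto u^{(\varepsilon)}((\pi_k,x);0,0)$ uniformly continuous and bounded, I would introduce the auxiliary Markovian value function
\[
\hat\theta^{(k)}(\pi_k;t,x):=\sup_{\alpha\in\mathcal{A}}\mathbb{E}^{\mathbb{P}_{\alpha}^{t_k+t}}\left[u^{(\varepsilon)}\big((\pi_k,x_k+x+B_{\eta_k-t});0,0\big)\right],
\]
which is a bounded viscosity solution of $\mathbf{L}^k\theta=0$ on $[0,\eta_k)$ with terminal $u^{(\varepsilon)}((\pi_k,\cdot);0,0)$. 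Applying Krylov's mollification again produces a classical supersolution $\hat\theta^{(k),(\delta_k)}(\pi_k;\cdot,\cdot)$ converging uniformly to $\hat\theta^{(k)}$; I then set $u^{(\varepsilon)}(\pi_k;t,x):=\hat\theta^{(k),(\delta_k)}(\pi_k;t,x)+c_k$ on $t\in[0,\eta_k)$ with $c_k\in(0,\varepsilon/(2n)]$, while imposing $u^{(\varepsilon)}(\pi_k;\eta_k,x):=u^{(\varepsilon)}((\pi_k,x);0,0)$ by definition. The resulting possible jump at $t=\eta_k$ is permitted because the classical supersolution property is only demanded on the open interval.

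Finally, with $\delta_k,c_k$ chosen small enough that the accumulated error is at most $\varepsilon/2$, a telescoping comparison with the recursive identity satisfied by $\theta_0^{\varepsilon}$ gives $0\le u^{(\varepsilon)}(0;0,0)-\theta_0^{\varepsilon}\le\varepsilon/2$. Adding the positive global constant $\varepsilon-(u^{(\varepsilon)}(0;0,0)-\theta_0^{\varepsilon})$ to $u^{(\varepsilon)}(\pi_k;\cdot,\cdot)$ at every level preserves the classical supersolution property (no zeroth-order term), the inequality boundary condition at $k=n-1$, and the equality boundary condition for $k<n-1$ (both sides shift by the same constant), and produces $u^{(\varepsilon)}(0;0,0)=\theta_0^{\varepsilon}+\varepsilon$ exactly. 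The principal technical obstacle, I expect, is propagating the uniform continuity of $(\pi_k,x)\mapsto u^{(\varepsilon)}((\pi_k,x);0,0)$ through the induction, since Krylov's theorem requires a continuous terminal at each step; this has to be verified level by level by estimates in the spirit of Lemma \ref{lem: th^h continuous}, using that mollification only improves the modulus of continuity in $(t,x)$ and that no new $\pi_k$-dependence is introduced along the way.
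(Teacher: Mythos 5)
Your construction has a genuine gap: it does not deliver the equality boundary condition for $k<n-1$, which is the substance of the proposition. On $[0,\eta_k)$ you set $u^{(\varepsilon)}(\pi_k;t,x)=\hat\theta^{(k),(\delta_k)}(\pi_k;t,x)+c_k$ and then \emph{decree} the value at $t=\eta_k$ to be $u^{(\varepsilon)}\big((\pi_k,x);0,0\big)$. But the value function $\hat\theta^{(k)}(\pi_k;t,x)$ converges, as $t\uparrow\eta_k$, precisely to its terminal datum $u^{(\varepsilon)}\big((\pi_k,x_k+x);0,0\big)$, so the one-sided limit of your function at the boundary is $u^{(\varepsilon)}\big((\pi_k,x_k+x);0,0\big)+c_k$ up to the mollification error: your function has a downward jump in $t$ of size at least $c_k$ minus that error, and $c_k$ cannot be sent to zero because you need it to dominate the very same error. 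Nor can any single constant remove the jump, since the boundary mismatch depends on $x$ through the mollification error. This matters because the equality condition in Proposition \ref{prop u eps} is exactly the continuity-of-gluing statement that Corollary \ref{cor: u eps} consumes: there $\bar u^{(\varepsilon)}$ is pieced together over the intervals $[t_k,t_{k+1})$ and It\^o's formula is telescoped across the grid points, which requires the left limit of $u^{(\varepsilon)}(\pi_k;\cdot,\cdot)$ at $\eta_k$ to \emph{equal} the next level's value at $(0,0)$. With your jumps the exact identity of the corollary fails (one is left with unaccounted jump terms, and only a one-sided inequality survives), so the proposition as stated is not proved. Declaring that "the supersolution property is only demanded on the open interval" does not rescue this, because the boundary condition is an assertion about the same (continuous) function, not a free definition.

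The paper's proof avoids the problem by never re-solving a control problem with the constructed supersolution as terminal data. It mollifies $\theta^{\varepsilon}(\pi_k;\cdot,\cdot)$ itself, whose recursive definition \eqref{def:th_eps} already encodes the boundary identity $\theta^{\varepsilon}(\pi_k;\eta_k,x)=\theta^{\varepsilon}\big((\pi_k,x_k+x);0,0\big)$, and then adds a constant $C_{\pi_k}$ that depends on the \emph{whole frozen path}, fixed by the forward recursion $v^{\varepsilon}(\pi_k;0,0)=v^{\varepsilon}(\pi_{k-1};\eta_{k-1},x_k-x_{k-1})+\varepsilon/n$ starting from the root. Because the constant may vary with $\pi_k$ rather than merely with the level $k$, the boundary equality holds exactly for every $x$, while the $\varepsilon/n$ margin per level, combined with $\Vert\theta^{\varepsilon,\delta}-\theta^{\varepsilon}\Vert\leq\varepsilon/(2n)$ uniformly in $\pi$ (Lemma \ref{lem: th^h continuous}), propagates the domination $v^{\varepsilon}\geq\theta^{\varepsilon}$ and hence the terminal inequality at $k=n-1$. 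This design also dissolves what you correctly flag as your "principal technical obstacle": since only $\theta^{\varepsilon}$ is mollified, the uniform modulus of Lemma \ref{lem: th^h continuous} is available once and for all, and no continuity of terminal data needs to be propagated through an induction. Note finally that your backward scheme cannot be patched by absorbing the boundary mismatch into the next level's constant, because in a backward induction that level is already built; the matching of constants must be done forward from the root, which is essentially the paper's construction.
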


\begin{proof}
Define $\theta ^{\varepsilon ,\delta }(\pi _{k};\cdot ,\cdot ):=\theta
^{\varepsilon }(\pi _{k};\cdot ,\cdot )\ast K^{\delta }$ for all $\pi
_{k}\in \mathbb{R}^{k\times d}$, $k\leq n$,where $K$ is a smooth function
with compact support in $(-1,0)\times O_{1}$ ($O_{1}$ is the unit ball in $%
\mathbb{R}^{d}$), and $K^{\delta }(t,x):=\delta ^{-d-2}K(t/\delta
^{2},x/\delta )$. By Lemma \ref{lem: th^h continuous}, $\theta ^{\varepsilon
,\delta }(\pi _{k};\cdot ,\cdot )$ converges uniformly to $\theta
^{\varepsilon }(\pi _{k};\cdot ,\cdot )$ uniformly in $\pi _{k}$, as $\delta
\rightarrow 0$. Take $\delta $ small enough so that $\Vert \theta
^{\varepsilon ,\delta }-\theta ^{\varepsilon }\Vert \leq \frac{\varepsilon }{%
2n}$. Further, Assumption \ref{assum:non Markov} implies that all the shifted coefficients $\mu(t_k+\cd,\cd), \si(t_k+\cd,\cd)$ satisfy the assumptions on the continuity of the coefficients in \cite{K-fd}, where the author 
proved that 
\begin{center}
$\theta ^{\varepsilon ,\delta }(\pi _{k};\cdot ,\cdot )$ is a
classical supersolution for $\left( \ref{pathPDE}\right) $.
\end{center}
 Note that $%
\theta^{\varepsilon ,\delta }(\pi _{k};\cdot ,\cdot )+C$ is still a
supersolution for any constant $C$. So we may define a smooth function $%
v^{\varepsilon }(0;\cdot ,\cdot) :=  \theta^{\varepsilon ,\delta }(0;\cdot ,\cdot )+C_0$ on $[0,t_{1}]\times \mathbb{R}^{d}$ with some constant $C_0$ such that
\begin{equation*}
v^{\varepsilon }(0;0,0)=\theta ^{\varepsilon }(0;0,0)+\frac{\varepsilon }{n}%
,\quad v^{\varepsilon }(0;\cdot ,\cdot )\geq \theta ^{\varepsilon }(0;\cdot
,\cdot ).
\end{equation*}%
Similarly, we define smooth functions $v^{\varepsilon }(\pi _{k};\cdot ,\cdot) := \theta^{\varepsilon ,\delta }(\pi _{k};\cdot ,\cdot )+C_{\pi_k}$ on $[0,\eta
_{k}]\times \mathbb{R}^{d}$ for $1\leq k\leq n-1$  with some constants $C_{\pi_k}$ such that 
\begin{equation*}
v^{\varepsilon }(\pi _{k};0,0)=v^{\varepsilon }(\pi _{k-1};\eta
_{k-1},x_{k}-x_{k-1})+\frac{\varepsilon }{n},\q
v^{\varepsilon }(\pi
_{k};\cdot ,\cdot )\geq \theta ^{\varepsilon }(\pi _{k};\cdot ,\cdot )~.
\end{equation*}%
Finally, we define for $\pi _{k}\in \mathbb{R}^{k\times d}$ and $(t,x)\in
\lbrack 0,\eta _{k})\times \mathbb{R}^{d}$ 
\begin{equation*}
u^{(\varepsilon )}(\pi _{k};t,x):=v^{\varepsilon }(\pi _{k};t,x)+\frac{n-k+1%
}{n}\varepsilon .
\end{equation*}%
It is now straightfoward to check that $u^{(\varepsilon )}$ satisfies the
requirements.
\end{proof}

The discrete framework we just developed may be linked to pathspace by means
of linear interpolation along the partition $\Lambda_{\varepsilon }$. Recall
that $\Theta $ was defined to be $\left[ 0,T\right] \times \Omega .$

\begin{cor}
\label{cor: u eps} Define $\bar{u}^{(\varepsilon )}:\Theta \rightarrow 
\mathbb{R}$ by 
\begin{equation*}
\bar{u}^{(\varepsilon )}(t,\omega ):=u^{(\varepsilon )}\big((\omega
_{t_{i}})_{0\leq i\leq k};t-t_{k},\omega _{t}-\omega _{t_{k}}\big),\quad %
\mbox{for}~~t\in \lbrack t_{k},t_{k+1}).
\end{equation*}%
There exist adapted processes $\l_t(\o),\varphi _{t}\left( \o\right) ,\eta_t \left(
\o\right) $ such that for all $\alpha \in \mathcal{A}$ 
\begin{equation*}
\bar{u}^{(\varepsilon )}(T,X^{\alpha })=\bar{u}_{0}^{(\varepsilon
)}+\int_{0}^{T}\Big(\l_t + \mu (t,\alpha _{t})\varphi_t +\frac{1}{2}\mathrm{%
Tr}\big((\sigma \sigma ^{\intercal})(t,\alpha _{t})\eta_t \big)
\Big) \big(X^{\alpha }\big)dt+\int_{0}^{T}\varphi _{t}(X^{\alpha })^{\intercal}\sigma (t,\alpha
_{t})dB_{t},
\end{equation*}
$\mathbb{P}_{0}\mbox{-a.s.},$ and%
\begin{equation*}
\Big(\l_t + \mu (t,\alpha_t )\varphi_t +\frac{1}{2}\mathrm{Tr}\big((\sigma
\sigma ^{\intercal})(t,\alpha_t )\eta_t \Big)(\omega )\leq 0,\quad 
\mbox{for
all}~~\alpha \in \mathcal{A},(t,\omega )\in \Theta .
\end{equation*}
\end{cor}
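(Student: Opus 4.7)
The strategy is a piecewise It\^o expansion followed by gluing. On each subinterval $[t_k,t_{k+1})$ of the partition $\Lambda_\varepsilon$, introduce the random "frozen path" $\pi_k(\omega):=(\omega_{t_i})_{0\leq i\leq k}$, which is $\mathcal{F}_{t_k}$-measurable and constant in $s$ over the interval. Since Proposition~\ref{prop u eps} ensures that $u^{(\varepsilon)}(\pi_k;\cdot,\cdot)\in C^{1,2}_b$, It\^o's formula applied to $s\mapsto u^{(\varepsilon)}(\pi_k;s-t_k,X^\alpha_s-X^\alpha_{t_k})$ yields the decomposition
$$
d\,u^{(\varepsilon)}(\pi_k;s-t_k,X^\alpha_s-X^\alpha_{t_k}) = \Big(\partial_t u^{(\varepsilon)} + \mu(s,\alpha_s)\cdot\partial_x u^{(\varepsilon)} + \tfrac{1}{2}\mathrm{Tr}\big(\sigma\sigma^\intercal(s,\alpha_s)\partial_{xx}^2 u^{(\varepsilon)}\big)\Big)\,ds + (\partial_x u^{(\varepsilon)})^\intercal\sigma(s,\alpha_s)\,dB_s,
$$
where the partial derivatives are evaluated at $(\pi_k(\omega);s-t_k,X^\alpha_s-X^\alpha_{t_k})$.

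Next, I would define on each $[t_k,t_{k+1})$ the processes $\lambda_t,\varphi_t,\eta_t$ to be $\partial_t u^{(\varepsilon)}$, $\partial_x u^{(\varepsilon)}$ and $\partial_{xx}^2 u^{(\varepsilon)}$ respectively, evaluated at $(\pi_k(\omega);t-t_k,\omega_t-\omega_{t_k})$. Since $\pi_k$ is $\mathcal{F}_{t_k}$-measurable and $\omega_t-\omega_{t_k}$ is $\mathcal{F}_t$-measurable, these are $\mathbb{F}$-adapted. Concatenating the piecewise It\^o representations over $k=0,1,\ldots,n-1$ produces a telescoping sum whose boundary terms at each $t_{k+1}$ take the form $u^{(\varepsilon)}(\pi_k;\eta_k,X^\alpha_{t_{k+1}}-X^\alpha_{t_k}) - u^{(\varepsilon)}((\pi_k,X^\alpha_{t_{k+1}});0,0)$. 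These vanish by the matching boundary condition in Proposition~\ref{prop u eps} for $k<n-1$, so the glued expression collapses to the claimed single integral representation for $\bar{u}^{(\varepsilon)}(T,X^\alpha)-\bar{u}^{(\varepsilon)}_0$.

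The drift inequality follows immediately from the classical supersolution property: the inequality $-\mathbf{L}^k u^{(\varepsilon)}(\pi_k;\cdot,\cdot)\geq 0$ on $[0,\eta_k)\times\mathbb{R}^d$ reads
$$
\partial_t u^{(\varepsilon)} + \mu(t_k+\cdot,b)\cdot\partial_x u^{(\varepsilon)} + \tfrac{1}{2}\mathrm{Tr}\big(\sigma\sigma^\intercal(t_k+\cdot,b)\partial_{xx}^2 u^{(\varepsilon)}\big)\leq 0,\quad\forall\,b\in A.
$$
Specializing $b=\alpha_t(\omega)$ on the set $\{t\in[t_k,t_{k+1})\}$ yields the required pointwise bound, uniformly in $\alpha\in\mathcal{A}$ and $(t,\omega)\in\Theta$.

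The main obstacle, and the reason the statement of Proposition~\ref{prop u eps} was carefully engineered with the $\varepsilon/n$ shifts, is ensuring that no jumps appear at the partition points when concatenating the piecewise decompositions; all the book-keeping needed for this is encoded in the matching boundary condition. Beyond this, the remaining work is essentially verifying that the triple $(\lambda,\varphi,\eta)$ is measurable in $(t,\omega)$ and $\mathbb{F}$-adapted, which is routine given that $\pi_k(\omega)$ is piecewise constant and $\mathcal{F}_{t_k}$-measurable on each interval.
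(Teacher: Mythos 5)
Your proposal is correct and follows essentially the same route as the paper's proof: a piecewise It\^o expansion on each $[t_k,t_{k+1})$ with $\lambda,\varphi,\eta$ defined as $\partial_t u^{(\varepsilon)},\partial_x u^{(\varepsilon)},\partial^2_{xx}u^{(\varepsilon)}$ along the frozen path, followed by the supersolution bound $\mathbf{L}^k u^{(\varepsilon)}\leq 0$ from Proposition \ref{prop u eps}. Your explicit telescoping via the matching boundary condition is only a slightly more detailed account of the gluing that the paper leaves implicit.
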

\begin{proof}
By It\^o's formula, we have
\begin{multline*}
\bar{u}^{(\varepsilon )}(t,X^{\alpha}) ~=~ \bar{u}^{(\varepsilon )}(t_k,X^{\alpha}) 
+\int_{t_k}^{t}\Big(\l_s+\mu (s,\alpha _{s})\varphi_s +\frac{1}{2}\mathrm{Tr}\big((\sigma \sigma ^{\intercal})(s,\alpha _{s})\eta_s \big)\Big) \big(X^{\alpha }\big)ds\\
+\int_{t_k}^{t}\varphi_{s}(X^{\alpha })^{\intercal}\sigma (s,\alpha
_{s})dB_{s},\q\mbox{for $t\in [t_k,t_{k+1})$,\q $\dbP_0$-a.s.},
\end{multline*}
with
\beaa
\left.\ba{lll}
& \l_t(\o) := \pa_t u^{(\e)}\big((\o_{t_i})_{0\le i\le k}; t -t_k, \o_t-\o_{t_k}\big), &\\
&\f_t(\o) := \pa_x u^{(\e)}\big((\o_{t_i})_{0\le i\le k}; t -t_k, \o_t-\o_{t_k}\big),&\\
&\eta_s(\o) := \pa^2_{xx} u^{(\e)}\big((\o_{t_i})_{0\le i\le k}; t -t_k, \o_t-\o_{t_k}\big),&
\ea\right.
\q\mbox{for $t\in [t_k,t_{k+1})$}.
\eeaa
By the supersolution property of $u^{(\varepsilon )}$ proved in Proposition \ref{prop u eps}, we have
\begin{multline*}
\Big(\l_t+\mu (t,\alpha _{t})\varphi_t +\frac{1}{2}\mathrm{Tr}\big((\sigma \sigma ^{\intercal})(t,\alpha _{t})\eta_t \big)\Big)(\o) \\ 
\le ~ {\bf L}^k u^{(\e)}\big((\o_{t_i})_{0\le i\le k}; \cd,\cd\big)(t -t_k, \o_t-\o_{t_k}) ~\le ~ 0.
\end{multline*}
The proof is completed.
\end{proof}

\no Finally, we prove an approximation analogous to Theorem \ref{thm: markov} in
our non-Markovian setting.

\begin{thm}
Suppose Assumption \ref{assum:non Markov} holds. Then we have 
\begin{equation*}
u_{0}=\lim_{h\rightarrow 0}v^{h},\q \mbox{where}~v^{h}:=\inf_{\varphi \in 
\mathcal{U}}\mathbb{E}^{\mathbb{P}_{0}}\Big[\sup_{a \in \mathcal{D}_{h}}%
\Big\{g(X_{T\wedge \cdot }^{a })-\int_{0}^{T}\varphi _{t}(X^{a })^\intercal \sigma (t,a _{t})dB_{t}\Big\}\Big].
\end{equation*}
\end{thm}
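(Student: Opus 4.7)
The argument mirrors the proof of Theorem~\ref{thm: markov}, with Proposition~\ref{prop u eps} and Corollary~\ref{cor: u eps} replacing the Markovian HJB-mollification. I would prove the two inequalities separately.

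\textbf{Lower bound.} Introduce the discretised-control value
$$u^h_0 := \sup_{\alpha \in \mathcal{A}_h} \mathbb{E}^{\mathbb{P}_0}\bigl[g(X^\alpha_{T\wedge\cdot})\bigr],$$
and abbreviate $\Phi^{a,\varphi}:=g(X^a_{T\wedge\cdot}) - \int_0^T\varphi_t(X^a)^\intercal\sigma(t,a_t)\,dB_t$. For $\alpha\in\mathcal{A}_h$ the path $\alpha_\cdot(\omega)$ lies in the finite set $\mathcal{D}_h$, and the piecewise-constant structure together with the $\mathcal{F}_{t^h_i}$-measurability of $\alpha_{t^h_i}$ allows one to identify $X^\alpha$ and $\int \varphi\sigma\,dB$ pathwise with their counterparts computed for the deterministic control $\alpha_\cdot(\omega)$; hence $\Phi^{\alpha,\varphi}(\omega) \le \sup_{a\in\mathcal{D}_h}\Phi^{a,\varphi}(\omega)$ a.s. Since $\varphi\in\mathcal{U}$ is bounded and $\sigma$ is bounded, the stochastic integral is a martingale, so $\mathbb{E}[\Phi^{\alpha,\varphi}]=\mathbb{E}[g(X^\alpha_{T\wedge\cdot})]$, and taking sup over $\alpha$ and inf over $\varphi$ yields $u^h_0\le v^h$. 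To pass $u^h_0\to u_0$, I approximate any $\alpha\in\mathcal{A}$ by $\alpha^h\in\mathcal{A}_h$ defined as the projection onto $N^h$ of $\alpha_{t^h_k}$ on $[t^h_k,t^h_{k+1})$; compactness of $A$ gives uniform continuity of $\mu,\sigma$ in the control, which together with their boundedness yields $X^{\alpha^h}\to X^\alpha$ in $L^2$, and the uniform continuity of $g$ forces $\mathbb{E}[g(X^{\alpha^h}_{T\wedge\cdot})]\to\mathbb{E}[g(X^\alpha_{T\wedge\cdot})]$.

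\textbf{Upper bound.} Fix $h$ and use the adapted process $\varphi^{(\varepsilon)}$ supplied by Corollary~\ref{cor: u eps}; it is bounded (since $u^{(\varepsilon)}$ is smooth with bounded derivatives), hence $\varphi^{(\varepsilon)}\in\mathcal{U}$. For any $a\in\mathcal{D}_h\subset\mathcal{A}$, the sign condition in Corollary~\ref{cor: u eps} gives
$$\bar u^{(\varepsilon)}(T,X^a) \;\le\; \bar u^{(\varepsilon)}_0 + \int_0^T \varphi^{(\varepsilon)}_t(X^a)^\intercal\sigma(t,a_t)\,dB_t,\quad \mathbb{P}_0\text{-a.s.,}$$
while the terminal condition in Proposition~\ref{prop u eps} yields $\bar u^{(\varepsilon)}(T,X^a) \ge g\bigl(\Gamma^n_\varepsilon((X^a_{t_i})_{0\le i\le n})\bigr)$. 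Writing $\rho$ for a modulus of continuity of $g$ and noting that the linear interpolant at the partition points differs from $X^a$ by at most $\mathbf{w}_{X^a}(\varepsilon,T)$, I obtain $\Phi^{a,\varphi^{(\varepsilon)}}\le \bar u^{(\varepsilon)}_0+\rho\bigl(\mathbf{w}_{X^a}(\varepsilon,T)\bigr)$ a.s., and consequently
$$v^h \;\le\; \bar u^{(\varepsilon)}_0 + \mathbb{E}^{\mathbb{P}_0}\!\Big[\sup_{a\in\mathcal{D}_h}\rho\bigl(\mathbf{w}_{X^a}(\varepsilon,T)\bigr)\Big].$$
With $h$ fixed, as $\varepsilon\to 0$ the first term $\bar u^{(\varepsilon)}_0 = \theta^\varepsilon_0+\varepsilon$ tends to $u_0$ by Lemma~\ref{lem: th eps u}; for the second, the key is that $\mathcal{D}_h$ is a \emph{finite} set, so a.s.\ continuity of each $X^a$ gives $\sup_{a\in\mathcal{D}_h}\rho(\mathbf{w}_{X^a}(\varepsilon,T))\to 0$ almost surely, and boundedness of $\rho$ (since $g$ is bounded) permits dominated convergence. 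This proves $v^h\le u_0$ for every $h$, which combined with $u^h_0\le v^h$ and $u^h_0\to u_0$ yields the claim.

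\textbf{Main obstacle.} The delicate step is the discretisation result $u^h_0\to u_0$: Theorem~\ref{thm: markov} imported this from Krylov's PDE analysis (Lemma~\ref{lem:krylov}), which has no off-the-shelf path-dependent analogue, so the approximating adapted controls must be constructed by hand and SDE stability applied. By contrast, the finiteness of $\mathcal{D}_h$ makes the $\varepsilon\to 0$ limit in the upper-bound step transparent once $h$ is frozen; any attempt to send $h\to 0$ \emph{before} $\varepsilon\to 0$ would immediately run into a union-bound blow-up over $|\mathcal{D}_h|$, so the correct order of limits is essential.
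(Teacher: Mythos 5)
Your upper-bound half is essentially the paper's own argument: the same minimizer $\varphi$ supplied by Corollary \ref{cor: u eps}, the same use of the nonpositive drift to replace the stochastic integral by $\bar u^{(\varepsilon)}(T,X^a)-\bar u^{(\varepsilon)}_0$, the same terminal inequality $\bar u^{(\varepsilon)}(T,X^a)\ge g^{\varepsilon}\big((X^a_{t_i})_{0\le i\le n}\big)$ together with $\bar u^{(\varepsilon)}_0=\theta^{\varepsilon}_0+\varepsilon$ from Proposition \ref{prop u eps}, and crucially the same order of limits ($h$ frozen, then $\varepsilon\to0$, exploiting the finiteness of $\mathcal{D}_h$). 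The only cosmetic difference is that you control $\sup_{a\in\mathcal{D}_h}\{g(X^a_{T\wedge\cdot})-g^{\varepsilon}((X^a_{t_i})_i)\}$ by a pathwise modulus bound and dominated convergence, whereas the paper bounds the supremum by the finite sum and uses the $L^1$ estimate behind Lemma \ref{lem: th eps u}; both work.

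The lower bound, however, contains a genuine gap, precisely at the step you yourself flag as the main obstacle. Your approximating control $\alpha^h$, obtained by projecting the sampled value $\alpha_{t^h_k}$ onto $N^h$ and freezing it on $[t^h_k,t^h_{k+1})$, does \emph{not} satisfy $X^{\alpha^h}\to X^{\alpha}$ in $L^2$ for general $\alpha\in\mathcal{A}$: an adapted control is only measurable in $t$ and may oscillate on scales finer than the mesh, and grid sampling is not an $L^2(dt\times d\mathbb{P}_0)$-approximation. Concretely, take $d=m=1$, $\mu=0$, $\sigma(t,a)=a$, $A=[1,2]$ with $1\in N^h$, and a deterministic square wave $\alpha$ alternating between $1$ and $2$ with period much smaller than $h$, phased so that $\alpha_{t^h_k}=1$ at every grid point: then $\alpha^h\equiv 1$, $X^{\alpha^h}=B$, $\mathbb{E}|X^{\alpha}_T-X^{\alpha^h}_T|^2=\int_0^T(\alpha_t-1)^2\,dt\approx T/2$, and for the bounded uniformly continuous payoff $g(\omega)=\cos(\omega_T)$ one gets $\mathbb{E}[g(X^{\alpha}_{T\wedge\cdot})]=e^{-\frac12\int_0^T\alpha_t^2dt}$, far from $e^{-T/2}=\mathbb{E}[g(X^{\alpha^h}_{T\wedge\cdot})]$. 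Uniform continuity of $\mu,\sigma$ in the control does not help, because $\alpha^h_t$ is simply not close to $\alpha_t$ off the grid. The assertion $u^h_0\to u_0$ is still true, but the construction must be the one the paper uses in Lemma \ref{lem convergence}: conditional time averages $\tilde\alpha^h_t:=\sum_i\frac{1}{t^h_{i+1}-t^h_i}\int_{t^h_i}^{t^h_{i+1}}\mathbb{E}^{\mathbb{P}_0}\big[\alpha_s|\mathcal{F}_{t^h_i}\big]ds\,\mathbf{1}_{[t^h_i,t^h_{i+1})}(t)$, which converge to $\alpha$ in $L^2(dt\times d\mathbb{P}_0)$ by martingale convergence (this is where the nested-partition hypothesis \eqref{orderpartition} enters), followed by rounding to the net $N^h$; convergence in measure plus boundedness then give $\sigma(\cdot,\tilde\alpha^h)\to\sigma(\cdot,\alpha)$ in $L^2$, hence $X^{\tilde\alpha^h}\to X^{\alpha}$ uniformly in $L^2$ by Doob's inequality and the It\^o isometry, and uniform continuity of $g$ concludes. (As in Lemma \ref{lem convergence}, the averaging lands in the convex hull of $A$, so the rounding step needs $A$ to be compatible with the net, or else a Krylov-type piecewise-constant-policy argument applied interval by interval through the recursion \eqref{def:th_eps}.) To be fair, the paper itself only gestures at this step --- ``arguing as in the proof of Theorem \ref{thm: markov}'' --- so your diagnosis of where the difficulty sits is exactly right; the sampling construction you chose, however, does not close it.
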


\begin{proof}
Arguing as in the proof of Theorem \ref{thm: markov}, one can easily deduce
using the Ito formula that $u_{0}\leq \lim_{h\rightarrow 0}v^{h}$.

Consider the function $\bar{u}^{(\varepsilon )}$ and let $\varphi $ be the
process defined in Corollary \ref{cor: u eps}. We have 
\begin{eqnarray*}
v^{h} &\leq &\mathbb{E}^{\mathbb{P}_{0}}\Big[\sup_{a \in \mathcal{D}_{h}}%
\Big\{g(X_{T\wedge \cdot }^{a })-\int_{0}^{T}\varphi _{t}(X^{a
})^{\intercal}\sigma (t,a _{t})dB_{t}\Big\}\Big] \\
&\leq &\mathbb{E}^{\mathbb{P}_{0}}\Big[\sup_{a \in \mathcal{D}_{h}}\Big\{%
g(X_{T\wedge \cdot }^{a })-\bar{u}_{T}^{(\varepsilon )}(X^{a })+\bar{u}%
_{0}^{(\varepsilon )}\Big\}\Big] \\
&\leq &\mathbb{E}^{\mathbb{P}_{0}}\Big[\sup_{a \in \mathcal{D}_{h}}\Big\{%
g(X_{T\wedge \cdot }^{a })-g^{\varepsilon }\big((X_{t_{i}}^{a })_{0\leq
i\leq n}\big)\Big\}\Big]+\theta _{0}^{\varepsilon }+\varepsilon.
\end{eqnarray*}%
For the last inequality, we use the fact that $\bar{u}_{0}^{(\varepsilon
)}=u^{(\varepsilon )}(0;0,0)=\theta _{0}^{\varepsilon }+\varepsilon $. Note
that there are only finite elements in the set $\mathcal{D}_{h}$. Therefore,
by Lemma \ref{lem: th eps u} 
\begin{eqnarray*}
&&\mathop{\overline{\rm lim}}_{\varepsilon \rightarrow 0}\Big(\mathbb{E}^{%
\mathbb{P}_{0}}\Big[\sup_{a \in \mathcal{D}_{h}}\Big\{g(X_{T\wedge \cdot
}^{a })-g^{\varepsilon }\big((X_{t_{i}}^{a })_{0\leq i\leq n}\big)\Big\}\Big]%
+\theta _{0}^{\varepsilon }+\varepsilon \Big) \\
&\leq &\mathop{\overline{\rm lim}}_{\varepsilon \rightarrow 0}\Big(\sum_{a
\in \mathcal{D}_{h}}\mathbb{E}^{\mathbb{P}_{0}}\big[\big|g(X_{T\wedge \cdot
}^{a })-g^{\varepsilon }\big((X_{t_{i}}^{a })_{0\leq i\leq n}\big)\big|\big]%
+\theta _{0}^{\varepsilon }+\varepsilon \Big) \\
&=&u_{0}.
\end{eqnarray*}%
We conclude that $v^{h}\leq u_{0}$ for all $h\in (0,1\wedge T].$
\end{proof}

\subsection{Example of a duality result for an American option\label{sec:4}}

In this subsection we give an indication how our approach may be extended to
American options. To this end we consider a toy model, in which the $d$%
-dimensional controlled diffusion $X^{\alpha }$ takes the particular form $%
X^{\alpha }:=\int_{0}^{\cdot }\alpha _{t}^{0}dt+\int_{0}^{\cdot }\alpha
_{t}^{1}dB_{t}$ and carry out the analysis in this elementary setting. The
stochastic control problem is now 
\begin{equation*}
u_{0}=\sup_{\alpha \in \mathcal{A},\tau \in \mathcal{T}_{T}}\mathbb{E}^{%
\mathbb{P}_{0}}\big[g(X_{\tau }^{\alpha })\big],
\end{equation*}%
where $\mathcal{T}_{T}$ is the set of all stopping times smaller than $T$.
Throughout this subsection we will make the following assumption:

\begin{assum}\label{assum: american}
Suppose $g:\dbR^d\rightarrow\dbR$ to be bounded and uniformly continuous.
\end{assum}

For $\alpha \in \mathcal{A}$ define probability measures $\mathbb{P}_{\alpha
}:=\mathbb{P}_{0}\circ (X^{\alpha })^{-1}$, let $\mathcal{P}:=\{\mathbb{P}%
_{\alpha }:\alpha \in \mathcal{A}\}$ and define the nonlinear expectation $%
\mathcal{E}[\cdot ]:=\sup_{\mathbb{P}\in \mathcal{P}}\mathbb{E}^{\mathbb{P}%
}[\cdot ]$. It will be convenient to use the shorthand $\alpha ^{1}\cdot B$
for the stochastic integral $\int_{0}^{\cdot }\alpha _{s}^{1}dB_{s}.$ We
have 
\begin{equation*}
u_{0}=\sup_{\tau \in \mathcal{T}_{T}}\mathcal{E}\big[g(B_{\tau })\big].
\end{equation*}%
Further, we define the dynamic version of the control problem: 
\begin{equation*}
u(t,x):=\sup_{\tau \in \mathcal{T}_{T-t}}\mathcal{E}\big[g(x+B_{\tau })%
\big],\quad \mbox{for}~~(t,x)\in \lbrack -1,T]\times \mathbb{R}^{d}.
\end{equation*}%
The following lemma shows that the function $u$ satisfies a dynamic
programming principle (see for example Lemma 4.1 of \cite{ETZ-os} for a
proof).

\begin{lem}
\label{lem: yet another}The value function $u$ is continuous in both
arguments, and we have 
\begin{equation*}
u(t_{1}, x)=\sup_{\tau \in \mathcal{T}_{T-t_{1}}}\mathcal{E}\big[%
g(x+B_{\tau })1_{\{\tau <t_{2}\}}+u(t_{2}, x+B_{t_2})1_{\{\tau \geq t_{2}\}}\big].
\end{equation*}%
In particular, $\{u(t,B_t)\}_{t\in [0,T]}$ is a $\mathbb{P}$-supermartingale for all $\mathbb{P}\in 
\mathcal{P}$.
\end{lem}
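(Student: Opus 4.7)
The plan is to establish the three claims in sequence: first the joint continuity of $u$, then the dynamic programming equality, and finally deduce the supermartingale property as a direct corollary. This is a classical result in the nonlinear expectation/$2$BSDE literature, so I will follow the pattern of \cite{ETZ-os}, taking advantage of the particularly simple structure here (the control $\alpha$ only affects drift/volatility and $g$ is bounded and uniformly continuous).

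For the continuity in the spatial variable, I would let $\rho_g$ be a concave modulus of continuity of $g$ and observe that for any fixed $\tau$ and $\mathbb{P}_\alpha\in\mathcal{P}$,
\[
\bigl|\mathbb{E}^{\mathbb{P}_\alpha}[g(x+B_\tau)]-\mathbb{E}^{\mathbb{P}_\alpha}[g(x'+B_\tau)]\bigr|\;\le\;\rho_g(|x-x'|),
\]
uniformly in $\tau$ and $\alpha$, so that $|u(t,x)-u(t,x')|\le \rho_g(|x-x'|)$. For continuity in $t$, for $t_1<t_2$ I would associate to any $\tau\in\mathcal{T}_{T-t_1}$ the truncation $\tau':=\tau\wedge(T-t_2)\in\mathcal{T}_{T-t_2}$ and estimate the difference via $\rho_g$ applied to $\mathcal{E}[|B_\tau-B_{\tau'}|]$; since $\alpha$ takes values in the compact set $A$ (and hence $B$ has bounded drift/diffusion under every $\mathbb{P}\in\mathcal{P}$), an estimate analogous to \eqref{estimate:momentX} gives $\sup_{\mathbb{P}\in\mathcal{P}}\mathbb{E}^{\mathbb{P}}|B_s-B_u|\le C\sqrt{|s-u|}$, which yields the desired $t$-continuity upon sending $t_2\downarrow t_1$.

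For the DPP, the $(\le)$ inequality is the soft direction: given $\tau\in\mathcal{T}_{T-t_1}$ and $\mathbb{P}_\alpha\in\mathcal{P}$, split the expectation according to $\{\tau<t_2\}$ and $\{\tau\ge t_2\}$; on the latter set condition on $\mathcal{F}_{t_2}$ and use the strong Markov structure of $\mathcal{P}$ (i.e.\ that for every $\mathbb{P}\in\mathcal{P}$ the regular conditional probability of $\mathbb{P}$ given $\mathcal{F}_{t_2}$ belongs, after shifting, to $\mathcal{P}$) together with the definition of $u(t_2,\cdot)$ to bound the conditional term by $u(t_2,x+B_{t_2})$. Taking the supremum over $\tau$ and $\mathbb{P}_\alpha$ yields the $(\le)$ inequality. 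The $(\ge)$ direction is the technically delicate one: starting from $\tau_1\in\mathcal{T}_{T-t_1}$ with $\tau_1\ge t_2$, and for each $\omega$ choosing an $\varepsilon$-optimizer $(\tau_2^\omega,\alpha_2^\omega)$ for $u(t_2,x+B_{t_2}(\omega))$, one concatenates them into an admissible pair $(\tau,\alpha)$. This step requires a measurable selection argument, which I view as the main obstacle; it can be handled as in \cite{ETZ-os} using the uniform continuity of $u(t_2,\cdot)$ established above to reduce to a countable partition of $\mathbb{R}^d$ on which $(\tau_2,\alpha_2)$ can be chosen constant, thereby bypassing abstract measurable selection theorems.

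Finally, the supermartingale property is an immediate consequence. Applying the DPP with the deterministic choice $\tau\equiv t_2-t_1\in\mathcal{T}_{T-t_1}$ (so that $\{\tau<t_2-t_1\}=\emptyset$ in the version with shifted time, the indicator terms collapse) gives $u(t_1,x)\ge\mathcal{E}[u(t_2,x+B_{t_2-t_1})]$. Specialising to $x=B_{t_1}(\omega)$ and using that for every $\mathbb{P}\in\mathcal{P}$ the conditional law of $B_{t_2}-B_{t_1}$ given $\mathcal{F}_{t_1}$ lies in the shifted family, this yields
\[
u(t_1,B_{t_1})\;\ge\;\mathbb{E}^{\mathbb{P}}\bigl[u(t_2,B_{t_2})\,\big|\,\mathcal{F}_{t_1}\bigr],\qquad\mathbb{P}\mbox{-a.s.},
\]
for every $\mathbb{P}\in\mathcal{P}$, which is precisely the claimed supermartingale property.
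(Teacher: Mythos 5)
Your proposal is correct and coincides with the argument the paper relies on: the paper gives no proof of this lemma, deferring instead to Lemma 4.1 of \cite{ETZ-os}, and your reconstruction (spatial and temporal continuity from a concave modulus of $g$ together with the moment estimate for the stopped paths, the easy DPP inequality via conditioning and stability of $\mathcal{P}$ under r.c.p.d., the hard inequality via countable-partition pasting of $\varepsilon$-optimizers, and the supermartingale property from the deterministic choice $\tau\equiv t_{2}-t_{1}$) is exactly the scheme of that reference. Your parenthetical remark that the indicators must be read in shifted time is the correct interpretation of the paper's slightly abusive notation, and the pasting and conditioning stability you invoke do hold here because $\mathcal{A}$ consists of all adapted $A$-valued processes.
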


Next we apply the familiar mollification technique already employed in
Section \ref{subsec:markov}. Define $u^{(\varepsilon )}:=u\ast
K^{(\varepsilon )}$.

\begin{lem}\label{lem: u e} 
$\{u^{(\varepsilon )}(t,B_{t})\}_{t}$ is a $\mathbb{P}$%
-supermartingale for all $\mathbb{P}\in \mathcal{P}$, and $u^{(\varepsilon
)}\geq g^{(\varepsilon )}:=g\ast K^{(\varepsilon )}$.
\end{lem}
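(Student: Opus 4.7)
The plan is to derive both statements by integrating the corresponding properties of $u$ against the convolution kernel $K^{(\varepsilon)}$, which I take to be a standard nonnegative mollifier of unit mass. Throughout, $u$ is bounded because $g$ is (Assumption~\ref{assum: american}), so all integrals converge absolutely and Fubini applies freely. Write
\[
u^{(\varepsilon)}(t,x) \;=\; \int u(t-s,\,x-y)\, K^{(\varepsilon)}(s,y)\, ds\, dy,
\]
with the kernel supported in $(-\varepsilon^{2},0)\times O_{\varepsilon}$.

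The pointwise inequality $u^{(\varepsilon)} \geq g^{(\varepsilon)}$ is immediate. Indeed, $\tau \equiv 0$ is an admissible stopping time in the definition $u(t,x) = \sup_{\tau\in\mathcal{T}_{T-t}}\mathcal{E}[g(x+B_{\tau})]$, so $u(t,x) \geq \mathcal{E}[g(x)] = g(x)$ for every $(t,x)$. Convolving both sides with the nonnegative kernel $K^{(\varepsilon)}$ delivers the claim.

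The supermartingale property is the main point. Fix $\mathbb{P}\in \mathcal{P}$ and $0\le t_1\le t_2\le T$. By the convolution representation and Fubini, the claim reduces to proving that, for every fixed $(s,y)$ in the support of $K^{(\varepsilon)}$,
\[
\mathbb{E}^{\mathbb{P}}\!\bigl[\,u(t_2-s,\,B_{t_2}-y)\,\big|\,\mathcal{F}_{t_1}\bigr] \;\le\; u(t_1-s,\,B_{t_1}-y).
\]
I would deduce this from Lemma~\ref{lem: yet another} applied at the shifted starting time $t_1-s$ and shifted starting point $B_{t_1}-y$, using the dynamic programming identity with intermediate time $t_2-s$ and the trivial stopping time choice $\tau \geq t_2 - s$. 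The feature of the toy model that makes this work is time-homogeneity: the diffusion coefficients enter only through $\alpha_t^{0}, \alpha_t^{1}$, so the control class $\mathcal{A}$ is invariant under time translation, and the law of the controlled increments of $B$ on $[t_1,t_2]$ conditioned on $\mathcal{F}_{t_1}$ coincides with that on $[t_1-s,t_2-s]$ started from the origin. A standard regular-conditional-probability / concatenation argument (of the kind used to derive the DPP itself, see e.g.\ \cite{NvH}) then upgrades the unconditional DPP inequality to the $\mathcal{F}_{t_1}$-conditional one above, after which integration against $K^{(\varepsilon)}$ yields the supermartingale inequality for $\{u^{(\varepsilon)}(t,B_t)\}_t$.

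The hard part is exactly this conditional time-and-space shift: once we have it in place, both assertions follow formally from linearity and positivity of the convolution. Care is only needed to check that when $t < 0$ the function $u(t-s,\cdot)$ is still well defined on the domain $[-1,T]$ in which $u$ is introduced, which is precisely why $u$ was extended to $[-1,T]\times\mathbb{R}^{d}$ in Lemma~\ref{lem: yet another}, so that the kernel support $(-\varepsilon^{2},0)$ with $\varepsilon\le 1$ causes no boundary issue.
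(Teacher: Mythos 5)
Your proposal is correct and follows essentially the same route as the paper: the paper likewise obtains $u^{(\varepsilon)}\geq g^{(\varepsilon)}$ by convolving the trivial inequality $u\geq g$ (stopping at $\tau\equiv 0$), and proves the supermartingale property by writing $u^{(\varepsilon)}$ as an integral of space--time translates of $u$, pulling the (nonlinear) expectation inside the convolution, and applying the shifted supermartingale/DPP inequality for $u$ from Lemma \ref{lem: yet another} before integrating against the nonnegative kernel $K^{(\varepsilon)}$. The only cosmetic difference is that the paper phrases the key step unconditionally via $\mathcal{E}[u^{(\varepsilon)}(t,x+B_{t-s})]\leq u^{(\varepsilon)}(s,x)$ for every starting point $x$ and then asserts the supermartingale property, whereas you make the $\mathcal{F}_{t_1}$-conditional version and the concatenation argument behind it explicit --- a point the paper leaves implicit.
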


\begin{proof}
For any $s\leq t\leq T$ and $x\in \mathbb{R}$, we have by Lemma \ref{lem:
yet another} 
\begin{eqnarray*}
\mathcal{E}\big[u^{(\varepsilon )}(t,x+B_{t-s})\big] &=&\mathcal{E}\Big[\int
u(t-r,x-y+B_{t-s})K^{(\varepsilon )}(r,y)dydr\Big] \\
&\leq &\int \mathcal{E}\big[u(t-r,x-y+B_{t-s})\big]K^{(\varepsilon
)}(r,y)dydr \\
&\leq &\int u(t-r-(t-s),x-y)K^{(\varepsilon )}(r,y)dydr \\
& = & \int u(s-r,x-y)K^{(\varepsilon )}(r,y)dydr ~=~u^{(\varepsilon )}(s,x),
\end{eqnarray*}%
where for the second inequality, we used the $\dbP$-supermartingale property of $\{u(t, B_t)\}_{t\in [0,T]}$ for all $\dbP\in \cP$.
This implies that for all $\mathbb{P}\in \mathcal{P}$ we have 
\begin{equation*}
\mathbb{E}^{\mathbb{P}}\big[u^{(\varepsilon )}(t,x+B_{t-s})\big]\leq
u^{(\varepsilon )}(s,x).
\end{equation*}%
Therefore, $\{u^{(\varepsilon )}(t,B_{t})\}_{t}$ is a $\mathbb{P}$%
-supermartingale for all $\mathbb{P}\in \mathcal{P}$. On the other hand, it
is clear from the definition of $u$ that $u\geq g$ and, hence, $%
u^{(\varepsilon )}\geq g^{(\varepsilon )}$.
\end{proof}

Again, the stochastic control problem can be discretized.  For technical reasons, we assume here that the partitions of time satisfy the order:
\be\label{orderpartition}
\{t^h_i\}_{i\le n_h} \subset \{t^{h'}_i\}_{i\le n_{h'}}, \q\mbox{for}~~h>h',
\ee
where $n_h$ is the number of the time grids of the partition.

\begin{lem}
\label{lem convergence}Under Assumption \ref{assum: american}, it holds 
\begin{equation}
u_{0}=\lim_{h\rightarrow 0}u_{0}^{h},\quad \mbox{where}~u_{0}^{h}:=\sup_{%
\alpha \in \mathcal{A}_{h},\tau \in \mathcal{T}_{T}}\mathbb{E}^{\mathbb{P}%
_{0}}\Big[g(X_{\tau }^{\alpha })\Big].  \label{n-pbm american}
\end{equation}
\end{lem}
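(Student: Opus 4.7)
The inequality $u_0^h \le u_0$ is immediate from $\mathcal{A}_h \subset \mathcal{A}$, since both problems optimize over the same set $\mathcal{T}_T$ of stopping times. Everything therefore reduces to showing $u_0 \le \liminf_{h\to 0} u_0^h$. My approach is a direct approximation: take a near-optimal pair for $u_0$ and perturb the control into $\mathcal{A}_h$ without moving the value of the payoff much, keeping the stopping time fixed.

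Concretely, fix $\varepsilon > 0$ and pick $(\alpha^*,\tau^*) \in \mathcal{A} \times \mathcal{T}_T$ with $\mathbb{E}^{\mathbb{P}_0}[g(X^{\alpha^*}_{\tau^*})] \ge u_0 - \varepsilon$. Using the nested structure \eqref{orderpartition} of the partitions, I would first approximate $\alpha^*$ in $L^2([0,T]\times\Omega;\mathbb{R}^k)$ by an $\mathbb{F}$-adapted step process $\tilde\alpha^h$ that takes values in $A$ and is constant on each $[t_i^h, t_{i+1}^h)$ with $\mathcal{F}_{t_i^h}$-measurable value (this is the classical approximation of progressively measurable bounded processes by adapted step functions, e.g.\ via a past-preserving mollification followed by freezing at the grid points). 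I would then quantize by setting $\alpha^h_t := \Pi_{N^h}(\tilde\alpha^h_t)$, where $\Pi_{N^h}:A \to N^h$ is a Borel-measurable nearest-point selection. By the covering property of the $h$-net, $|\alpha^h - \tilde\alpha^h|\le h$ pointwise, so $\alpha^h \in \mathcal{A}_h$ and $\alpha^h \to \alpha^*$ in $L^2([0,T]\times\Omega)$.

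Since the SDE $X^\alpha = \int_0^\cdot \alpha_t^0 \, dt + \int_0^\cdot \alpha_t^1 \, dB_t$ depends linearly on $\alpha$, Doob's inequality and the Itô isometry give
\begin{equation*}
\mathbb{E}^{\mathbb{P}_0}\Big[\sup_{t \le T}|X^{\alpha^h}_t - X^{\alpha^*}_t|^2\Big] \le C\,\mathbb{E}^{\mathbb{P}_0}\Big[\int_0^T |\alpha^h_s - \alpha^*_s|^2 \, ds\Big] \longrightarrow 0.
\end{equation*}
In particular $X^{\alpha^h}_{\tau^*} \to X^{\alpha^*}_{\tau^*}$ in probability, and since $g$ is bounded and uniformly continuous by Assumption \ref{assum: american}, dominated convergence yields $\mathbb{E}^{\mathbb{P}_0}[g(X^{\alpha^h}_{\tau^*})] \to \mathbb{E}^{\mathbb{P}_0}[g(X^{\alpha^*}_{\tau^*})]$. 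As $\tau^*\in\mathcal{T}_T$ is admissible in the definition of $u_0^h$, we obtain $u_0^h \ge \mathbb{E}^{\mathbb{P}_0}[g(X^{\alpha^h}_{\tau^*})]$, and taking $\liminf$ gives $\liminf_h u_0^h \ge u_0 - \varepsilon$; letting $\varepsilon \downarrow 0$ finishes the proof.

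The main delicacy is the construction of $\tilde\alpha^h$: one must simultaneously preserve $\mathbb{F}$-adaptedness, keep the values in the compact set $A$ (so that the subsequent quantization to $N^h$ makes sense), be constant on each $[t_i^h, t_{i+1}^h)$, and achieve $L^2$ convergence to an arbitrary bounded adapted $\alpha^*$ with no path regularity. A naive left-endpoint freeze $\tilde\alpha^h_t := \alpha^*_{t_i^h}$ on $[t_i^h, t_{i+1}^h)$ does not converge without additional regularity, which is precisely why a past-preserving smoothing is needed before freezing and quantizing. Once this step is carried out, the SDE stability, the dominated-convergence argument using the continuity of $g$, and the admissibility of $\tau^*$ for $u_0^h$ are all routine.
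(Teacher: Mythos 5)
Your proof is correct, and at the strategic level it matches the paper's: both reduce the claim to $u_0 \le \liminf_{h\to 0} u_0^h$ (the converse being immediate from $\mathcal{A}_h \subset \mathcal{A}$), take a near-optimal control, replace it by an adapted, piecewise-constant, $N^h$-valued control converging in $L^2(dt\times d\mathbb{P}_0)$, and transfer this to the value via $L^2$-stability of the stochastic integral and the uniform continuity of $g$. The genuine differences are in the two technical steps. First, for the piecewise-constant approximation the paper does not mollify: it averages with conditional expectations, setting $\tilde\alpha^h_t = (t^h_{i+1}-t^h_i)^{-1}\int_{t_i^h}^{t_{i+1}^h}\mathbb{E}^{\mathbb{P}_0}[\alpha^\varepsilon_s\,|\,\mathcal{F}_{t_i^h}]\,ds$ on $[t_i^h,t_{i+1}^h)$, and obtains the $L^2$ convergence by showing $\{\tilde\alpha^h\}_h$ is a \emph{martingale} on the product space $([0,T]\times\Omega,\hat{\mathbb{P}})$ with respect to the filtration $\hat{\mathcal{F}}_h$ and invoking the martingale convergence theorem; this is precisely where the nesting assumption \eqref{orderpartition} enters. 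Your mollify-then-freeze construction is the classical density argument, needs no nested partitions (so it is marginally more general on that point), but requires the diagonal tuning of mollification scale against mesh that you correctly flag as the delicate step. Second, for the stopping time: you fix a near-optimal $\tau^*$ and pass to the limit by convergence in probability plus boundedness of $g$, while the paper keeps the supremum over $\tau\in\mathcal{T}_T$ and dominates the difference of the two optimal-stopping values by $\rho\big(\mathbb{E}^{\mathbb{P}_0}[\int_0^T|\alpha^\varepsilon_s-\hat\alpha^h_s|^2ds]^{1/2}\big)$ via a concave modulus of continuity, Doob's inequality and Jensen; your version is lighter, the paper's yields a quantitative estimate. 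One caveat in your write-up: a past-preserving mollification takes values in the \emph{convex hull} of $A$, not in $A$, so your pointwise bound $|\alpha^h-\tilde\alpha^h|\le h$ is only justified when $A$ is convex; in general replace it by $|\Pi_{N^h}(\tilde\alpha^h_t)-\tilde\alpha^h_t|\le \mathrm{dist}(\tilde\alpha^h_t,A)+h\le |\tilde\alpha^h_t-\alpha^*_t|+h$ (using $\alpha^*_t\in A$), which still gives $\alpha^h\to\alpha^*$ in $L^2$ and repairs the step. The paper's quantization $\hat\alpha^h = h\lfloor\tilde\alpha^h/h\rfloor$ silently faces the same issue, and the paper moreover only writes out the scalar case $\alpha^0=0$, $\alpha=\alpha^1$, so on this point your argument is no less complete than the original.
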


\begin{proof}
We only prove the case $\alpha ^{0}=0$ and $\alpha =\alpha ^{1}$ $\in $ $A^1$, a compact set in $\dbR$, in particular, $X^{\alpha }=(\alpha \cdot B).$ The general case follows
by a straightfoward generalization of the same arguments. Note that it is
sufficient to show that $u_{0}\leq \mathop{\underline{\rm lim}}%
_{h\rightarrow 0}u_{0}^{h}$. Fix $\epsilon >0.$ There exists $\alpha
^{\varepsilon }\in \mathcal{A}$ such that 
\begin{equation}
u_{0}<\sup_{\tau \in \mathcal{T}_{T}}\mathbb{E}^{\mathbb{P}_{0}}\big[g\big(%
(\alpha ^{\varepsilon }\cdot B)_{\tau }\big)\big]+\varepsilon .
\label{eq:eps optimal u0}
\end{equation}%
For any $h$ sufficiently small define a process $\tilde{\alpha}^{h}$ by
letting 
\begin{equation*}
\tilde{\alpha}_{t}^{h}:=\sum_{i}\frac{1}{t_{i+1}^{h}-t_{i}^{h}}%
\int_{t_{i}^{h}}^{t_{i+1}^{h}}\mathbb{E}^{\mathbb{P}_{0}}\left[
\alpha _{s}^{\varepsilon }\big| \cF_{t_{i}^{h}} \right] ds\mathbf{1}_{[t_{i}^{h},t_{i+1}^{h})}(t).
\end{equation*}%
Clearly, $\tilde{\alpha}^{h}$ is piecewise constant on each interval $%
[t_{i}^{h},t_{i+1}^{h})$. We introduce the filtration $\hat\dbF := \{ \hat\cF_h \}_h$, with
\beaa
\hat\cF_h := \si\Big(\Big\{[t^h_i, t^h_{i+1})\times A: i\le n_h-1, A\in \cF_{t^h_i} \Big\}\Big).
\eeaa
In particular, it follows from \eqref{orderpartition} that $\hat\cF_h \subset \hat \cF_{h'}$ for $h>h'$. Also, denote the probability $\hat \dbP$ on the product space $\Th$:
\beaa
\hat\dbP(dt,d\o) := \frac{1}{T}dt \times \dbP_0(d\o). 
\eeaa
Note that for all $i\le n_h-1$, $A\in \hat\cF_{t^h_i}$ and $h'< h$ we have
\beaa
\dbE^{\hat \dbP_0}\Big[\tilde \a^{h'} ~1_{\{[t^h_i,t^h_{i+1})\times A\}}\Big]
&=& \dbE^{\dbP_0}\left[\frac{1}{T}\sum_{j: t^{h}_i\le t^{h'}_j, t^{h'}_{j+1}\le t^h_{i+1}}
\int_{t^{h'}_j}^{t^{h'}_{j+1}} \mathbb{E}^{\mathbb{P}_{0}}\left[
\alpha_{s}^{\varepsilon }\big| \cF_{t_{j}^{h'}} \right] ds ~1_A \right]\\
&=& \dbE^{\dbP_0}\left[\frac{1}{T}\int_{t^{h}_i}^{t^{h}_{i+1}} \mathbb{E}^{\mathbb{P}_{0}}\left[
\alpha_{s}^{\varepsilon }\big| \cF_{t_{i}^{h}} \right] ds ~1_A \right]\\
&=& \dbE^{\hat \dbP_0}\Big[\tilde \a^{h} ~1_{\{[t^h_i,t^h_{i+1})\times A\}}\Big].
\eeaa
So $\{\tilde \a^h\}_h$ is a martingale in the filtrated probability space $\big(\Th, \hat\dbP, \hat\dbF\big)$.
Note that $\a^\e$ and $\tilde \a^h$ are bounded, so it follows from the martingale
convergence theorem that
\begin{equation}
\lim_{h\rightarrow 0}\mathbb{E}^{\mathbb{P}_{0}}\int_{0}^{T}(\alpha
_{s}^{\varepsilon }-\tilde{\alpha}_{s}^{h})^{2}ds~=~0  \label{eq:ah limit}
\end{equation}
Further, define $\hat{\alpha}^{h}:=h\left\lfloor 
\frac{\tilde{\alpha}^{h}}{h}\right\rfloor $ and note that we have $\hat{%
\alpha}^{h}\in \mathcal{A}_{h}$. It follows from \eqref{eq:ah limit} that
\begin{equation*}
\lim_{h\rightarrow 0}\mathbb{E}^{\mathbb{P}_{0}}\int_{0}^{T}(\alpha
_{s}^{\varepsilon }-\hat{\alpha}_{s}^{h})^{2}ds=0.
\end{equation*}%
With $\rho $ an increasing and concave modulus of continuity of $g$ we have 
\begin{eqnarray}
&& \sup_{\tau \in \mathcal{T}_{T}}\mathbb{E}^{\mathbb{P}_{0}}\big[g\big((\alpha
^{\varepsilon }\cdot B)_{\tau }\big)\big]-\sup_{\tau \in \mathcal{T}_{T}}%
\mathbb{E}^{\mathbb{P}_{0}}\big[g\big((\hat{\alpha}^{h}\cdot B)_{\tau }\big)%
\big] \notag \\
&\leq & \sup_{\t\in \cT_T}\mathbb{E}^{\mathbb{P}_{0}}\Big[\rho \big( | (\alpha
^{\varepsilon }\cdot B)_\t -(\hat{\alpha}^{h}\cdot B)_\t | \big)\Big]  \notag \\
&\le &   \mathbb{E}^{\mathbb{P}_{0}}\Big[\rho \big( \Vert (\alpha
^{\varepsilon }\cdot B) -(\hat{\alpha}^{h}\cdot B) \Vert_\infty \Big]  \notag \\
&=&\rho \Big(\mathbb{E}^{\mathbb{P}_{0}}\Big[\int_{0}^{T}(\alpha
_{s}^{\varepsilon }-\hat{\alpha}_{s}^{h})^{2}ds\Big]^\frac12 \Big)
\label{eqn yet another}
\end{eqnarray}%
Combining \eqref{eq:eps optimal u0}, \eqref{eqn yet another} we have%
\begin{eqnarray*}
u_{0} &<&\sup_{\tau \in \mathcal{T}_{T}}\mathbb{E}^{\mathbb{P}_{0}}\left[
g\left( (\hat{\alpha}^{h}\cdot B)_{\tau }\right) \right] +\rho \Big(\mathbb{E%
}^{\mathbb{P}_{0}}\Big[\int_{0}^{T}(\alpha _{s}^{\varepsilon }-\hat{\alpha}%
_{s}^{h})^{2}ds\Big]^\frac12 \Big)+\varepsilon \\
&\leq &u_{0}^{h}+\rho \Big(\mathbb{E}^{\mathbb{P}_{0}}\Big[%
\int_{0}^{T}(\alpha _{s}^{\varepsilon }-\hat{\alpha}_{s}^{h})^{2}ds\Big]^\frac12 \Big)%
+\varepsilon .
\end{eqnarray*}%
Letting $h\rightarrow 0$ we deduce 
\begin{equation*}
u_{0}\leq \underline{\lim }_{h\rightarrow 0}u_{0}^{h}+\varepsilon .
\end{equation*}%
for all $\varepsilon >0.$
\end{proof}

We conclude the section by proving the analogous approximation result for
American options.

\begin{thm}
\label{thm: american} Suppose Assumption \ref{assum: american} holds. Then
we have 
\begin{equation*}
u_{0}=\lim_{h\rightarrow 0}v^{h},\q \mbox{where}~v^{h}:=\inf_{\varphi \in 
\mathcal{U}}\mathbb{E}^{\mathbb{P}_{0}}\Big[\sup_{\alpha \in \mathcal{D}%
_{h},t\in \lbrack 0,T]}\Big\{g(X_{t}^{\alpha })-\int_{0}^{t}\varphi _{s}(X^{\alpha })^\intercal\alpha _{s}dB_{s}\Big\}\Big].
\end{equation*}
\end{thm}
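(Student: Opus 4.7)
The plan is to prove $u_0 \le \liminf_{h\to 0} v^h$ and $\limsup_{h\to 0} v^h \le u_0$ separately, using Lemma \ref{lem convergence} and the mollification machinery of Lemma \ref{lem: u e} in close analogy with the proof of Theorem \ref{thm: markov}. The only genuinely new input is the presence of a stopping time $\tau$, which is handled via optional sampling for the upper bound, and a pathwise sup over $t$ in the weak duality argument for the lower bound.

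For the direction $u_0 \le \liminf_{h\to 0} v^h$, I would fix arbitrary $\varphi \in \cU$, $\alpha \in \cA_h$, $\tau \in \cT_T$. The boundedness of $\varphi$ and the compactness of $A$ make $\int_0^\cdot \varphi_s(X^\alpha)^\intercal \alpha_s dB_s$ a true $\dbP_0$-martingale, so by optional stopping
$$\dbE^{\dbP_0}[g(X_\tau^\alpha)] = \dbE^{\dbP_0}\Big[g(X_\tau^\alpha) - \int_0^\tau \varphi_s(X^\alpha)^\intercal \alpha_s dB_s\Big] \le \dbE^{\dbP_0}\Big[\sup_{a\in\cD_h,\, t\in[0,T]}\Big\{g(X_t^a) - \int_0^t \varphi_s(X^a)^\intercal a_s dB_s\Big\}\Big].$$
Taking the suprema over $\alpha \in \cA_h$ and $\tau \in \cT_T$ and the infimum over $\varphi$ gives $u_0^h \le v^h$, and Lemma \ref{lem convergence} concludes this direction.

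For $\limsup_{h\to 0} v^h \le u_0$, I would use the mollification $u^{(\varepsilon)}$ from Lemma \ref{lem: u e}. Via the change of variables $\dbP_\alpha = \dbP_0 \circ (X^\alpha)^{-1}$, the $\dbP$-supermartingale property of $\{u^{(\varepsilon)}(t,B_t)\}_t$ for every $\dbP\in\cP$ translates into $\{u^{(\varepsilon)}(t,X_t^\alpha)\}_t$ being a $\dbP_0$-supermartingale for every $\alpha\in\cA$; by applying this to piecewise-constant controls and using the smoothness of $u^{(\varepsilon)}$, one obtains the pointwise drift inequality $\partial_t u^{(\varepsilon)} + b\,\partial_x u^{(\varepsilon)} + \tfrac12 c^2\,\partial_{xx}^2 u^{(\varepsilon)} \le 0$ for all admissible $(b,c)\in A$. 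Setting $\varphi^\varepsilon_t(\omega) := \partial_x u^{(\varepsilon)}(t,\omega_t)$ and applying It\^o's formula to $u^{(\varepsilon)}(t,X_t^a)$ for $a\in\cD_h$, the drift is nonpositive and the bound $u^{(\varepsilon)} \ge g^{(\varepsilon)} := g * K^{(\varepsilon)}$ yields the pathwise estimate
$$g^{(\varepsilon)}(X_t^a) - \int_0^t \varphi^\varepsilon_s(X^a)^\intercal a_s dB_s \le u^{(\varepsilon)}(0,0), \q\mbox{for all}~~a\in\cD_h,\,t\in[0,T].$$
Taking sup over $(a,t)$ and $\dbP_0$-expectations gives $v^h \le \dbE^{\dbP_0}\big[\sup_{a,t}|g(X_t^a) - g^{(\varepsilon)}(X_t^a)|\big] + u^{(\varepsilon)}(0,0)$, and letting $\varepsilon\to 0$ the first term vanishes (as $g^{(\varepsilon)} \to g$ uniformly, by boundedness and uniform continuity of $g$) while $u^{(\varepsilon)}(0,0)\to u(0,0)=u_0$ by the continuity of $u$ from Lemma \ref{lem: yet another}.

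The main obstacle is the extraction of the pointwise drift inequality for $u^{(\varepsilon)}$ from the simultaneous $\dbP_\alpha$-supermartingale property. In the Markovian section this was supplied by Krylov's classical-supersolution result for mollified viscosity solutions, whereas here the smoothness of $u^{(\varepsilon)}$ together with a Lebesgue-differentiation-style argument applied to piecewise-constant controls must be used to recover the fully deterministic inequality for every $(b,c)\in A$. Once this pointwise statement is in place, the remaining steps are routine analogues of the Markovian argument.
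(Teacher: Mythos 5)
Your proposal matches the paper's proof in both directions: the same weak-duality step (martingale property plus optional stopping, then domination by the pathwise supremum and Lemma \ref{lem convergence}) for $u_0\le\liminf_{h\to 0}v^h$, and the same mollification argument via Lemma \ref{lem: u e} (supermartingale property of $u^{(\varepsilon)}(t,B_t)$, the drift inequality for $u^{(\varepsilon)}$, It\^o's formula with $\varphi^\varepsilon=\partial_x u^{(\varepsilon)}$, and $u^{(\varepsilon)}\ge g^{(\varepsilon)}$) for the converse. The only difference is that you spell out the Lebesgue-differentiation derivation of the pointwise drift inequality and the final $\varepsilon\to 0$ limit, which the paper asserts without detail; your justifications are correct.
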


\begin{proof}
We first prove that the left hand side is smaller. Recall $u_{0}^{h}$
defined in \eqref{n-pbm american}. For all $\varphi \in \mathcal{U}$, the
process $\int_{0}^{\cdot }\varphi _{t}(X^{\alpha })^{\intercal}\alpha
_{t}^{1}dB_{t}$ is a martingale, and we have 
\beaa
u_{0}^{h} &\leq &\sup_{\alpha \in \mathcal{A}_{h},\tau \in \mathcal{T}_{T}}%
\mathbb{E}^{\mathbb{P}_{0}}\Big[g(X_{\tau }^{\alpha })-\int_{0}^{\tau
}\varphi _{t}(X^{\alpha })^{\intercal}\alpha _{t}^{1}dB_{t}\Big],~~\mbox{for all}~\varphi \in \mathcal{U}.
\eeaa
 Since for any $\a\in \cA_h$ and $\t\in \cT_T$ we have $g(X_{\tau }^{\alpha })-\int_{0}^{\tau
}\varphi _{t}(X^{\alpha })^{\intercal}\alpha _{t}^{1}dB_{t} \le \sup_{a \in \mathcal{D}_{h},t\in
\lbrack 0,T]}\Big\{g(X_{t}^{a })-\int_{0}^{t}\varphi _{s}(X^{a
})^{\intercal}a _{s}^{1}dB_{s}\Big\}$, we obtain
\beaa
u_0^h &\leq &\mathbb{E}^{\mathbb{P}_{0}}\Big[ \sup_{a \in \mathcal{D}_{h},t\in
\lbrack 0,T]}\Big\{g(X_{t}^{a })-\int_{0}^{t}\varphi _{s}(X^{a
})^{\intercal}a _{s}^{1}dB_{s}\Big\}\Big],~~\mbox{for all}~\varphi \in \mathcal{U}.
\end{eqnarray*}%
The required result follows by Lemma \ref{lem convergence}. 

For the converse, recall that $u^{(\varepsilon
)}(t,B_{t})$ is a $\mathbb{P}$-supermartingale for all $\mathbb{P}\in 
\mathcal{P}$ (Lemma \ref{lem: u e}). Further, since $u^{(\varepsilon )}\in C^{1,2}$, we have 
\begin{equation*}
\partial _{t}u^{(\varepsilon )}+\sup_{(b^0,b^1) \in A}\Big\{ b^{0}\partial
_{x}u^{(\varepsilon )}+\frac{1}{2}\mathrm{Tr}\big(b^{1}(b^{1})^\intercal \partial _{xx}^{2}u^{(\varepsilon )}\big)\Big\}\leq 0.
\end{equation*}
Hence, for all $h>0$ 
\begin{eqnarray*}
v_{h} &\leq &\mathbb{E}^{\mathbb{P}_{0}}\Big[\sup_{a \in \mathcal{D}%
_{h},t\in \lbrack 0,T]}\Big\{g(X_{t}^{a })-\int_{0}^{t}\partial
_{x}u_{s}^{(\varepsilon )}(X^{a })^{\intercal}a _{s}^{1}dB_{s}\Big\}\Big]
\\
&\leq &\mathbb{E}^{\mathbb{P}_{0}}\Big[\sup_{a \in \mathcal{D}_{h},t\in
\lbrack 0,T]}\Big\{g(X_{t}^{a })-u_{t}^{(\varepsilon )}(X_{t}^{a
})+u_{0}^{(\varepsilon )} \\
&&\quad \quad \quad +\int_{0}^{t}\Big(\partial _{t}u_{s}^{(\varepsilon
)}(X_{s}^{a })+a _{s}^{0}\cdot \partial _{x}u_{s}^{(\varepsilon )}(X_{s}^{a
})+\frac{1}{2}\mathrm{Tr}\big(a _{s}^{1}(a _{s}^{1})^\intercal \partial
_{xx}^{2}u_{s}^{(\varepsilon )}(X_{s}^{a })\big)\Big)ds\Big\}\Big] \\
&\leq &\mathbb{E}^{\mathbb{P}_{0}}\Big[\sup_{a \in \mathcal{D}_{h},t\in
\lbrack 0,T]}\big\{g(X_{t}^{a })-g^{(\varepsilon )}(X_{t}^{a })\big\}\Big]%
+u_{0}^{(\varepsilon )},
\end{eqnarray*}%
where we have used Ito's formula and the inequality $u^{(\varepsilon )}\geq
g^{(\varepsilon )}$ proved in Lemma \ref{lem: u e}. It is straightforward to
check that%
\begin{equation*}
\lim_{\varepsilon \rightarrow 0}\left( \mathbb{E}^{\mathbb{P}_{0}}\Big[%
\sup_{a \in \mathcal{D}_{h},t\in \lbrack 0,T]}\big\{g(X_{t}^{a
})-g^{(\varepsilon )}(X_{t}^{a })\big\}\Big]+u_{0}^{(\varepsilon )}\right)
=u_{0}.
\end{equation*}
\end{proof}

\section{Examples}

\subsection{Uncertain volatility model\label{sec:5}}

\noindent As a first example, we consider an uncertain volatility model
(UVM), first considered in \cite{ALP} and \cite{L}. Let $A\subseteq \mathbb{R}%
^{d}\times \mathbb{R}^{d\times d}$ be a compact domain such that for all $\left( \sigma ^{i}\,,\rho ^{ij}\right) _{1\leq i,j\leq d}$ $\in A$, the
matrix 
\begin{equation*}
\left( \rho ^{ij}\sigma ^{i}\sigma ^{j}\right) _{1\leq i,j\leq d}
\end{equation*}%
is positive semi-definite, $\rho ^{ij}=\rho ^{ji}\in \left[ -1,1\right] $
and $\rho ^{ii}=1.$ If $d=2$ an example of such a domain is obtained by
setting 
\begin{equation*}
A ~=~ \Big(\prod_{i=1}^{2}[\underline{\sigma }^{i},\overline{\sigma }^{i}]\Big)%
\times \left\{ \left( 
\begin{array}{cc}
1 & \rho \\ 
\rho & 1%
\end{array}%
\right) :\rho \in \left[ \underline{\rho },\overline{\rho }\right] \right\} ,
\end{equation*}%
where $0\leq \underline{\sigma }^{i}\leq \overline{\sigma }^{i}$ and $-1\leq 
\underline{\rho }\leq \overline{\rho }\leq 1$. Recall the definition of $\cA$, i.e. an adapted process $\left(
\sigma ,\rho \right) =\left( \sigma _{t},\rho _{t}\right) _{0\leq t\leq T}\in \cA$ if it takes
values in $A$.
In the UVM the stock prices follow the dynamics%
\begin{equation*}
d(X^{\si,\rho}_{t})^{i}=\sigma _{t}^{i}(X^{\si,\rho}_{t})^{i}dW_{t}^{i},\quad d\langle
W^{i},W^{j}\rangle _{t}=\rho ^{ij}dt,\quad 1\leq i<j\leq d,
\end{equation*}%
where $W^i$ is a $1$-dimensional Brownian motion for all $i\le d$, and  $\left( \sigma ,\rho \right) \in \cA$ is the unknown volatility
process and correlation. The value of the option at time $t$ in the UVM,
interpreted as a super-replication price under uncertain volatilities, is
given by 
\begin{equation}\label{valuefunUVM}
u_{t} ~=~ \sup_{\left( \sigma ,\rho \right) \in \cA} \mathbb{E}\big[\xi_{T}(X^{\si,\rho})|\mathcal{F}_{t}\big].
\end{equation}

\noindent For European payoffs, $\xi_{T}(\o)=g(\o_{T})$, the value $u(t,x)$ is
then the unique viscosity solution (under suitable conditions on $g$)
of the nonlinear PDE: 
\be\label{eq:HJBuvm}
\partial _{t}u(t,x)+H(x,D_{x}^{2}u(t,x))=0,\q u(T,x)=g(x)
\ee
with the Hamiltonian 
\begin{equation*}
H(x,\gamma )={\frac{1}{2}}\max_{(\sigma ^{i},\rho ^{ij})_{1\leq i,j\leq
d}\in A}\sum_{i,j=1}^{d}\rho ^{ij}\sigma ^{i}\sigma ^{j}x^{i}x^{j}\gamma ^{ij},
\q
\mbox{for all}~~ x\in \dbR^d, \g \in \dbR^{d\times d}.
\end{equation*}

\ms

\subsubsection*{Second order backward stochastic differential equation (2BSDE)} 

Fix constants $\hat \si = (\hat \si^i)_{1\le i\le d}$ and $\hat \rho = (\hat\rho^{i,j})_{1\le i,j\le d}$. Denote a new diffusion process $\widehat X$:
\beaa
d\widehat{X}_{t}^{i} &=& \hat{\sigma }^{i}\widehat{X}_{t}^{i}d\widehat W_{t}^{i},
\q d\langle \widehat W^{i}, \widehat W^{j}\rangle_t=\hat{\rho }^{ij}dt,\text{ }1\leq i\leq
j\leq d,
\eeaa
where $\widehat W^i$ is $1$-dimensional Brownian motion for all $1\le i\le d$. Consider the dynamics: 
\be\label{dym2bsde}
\left.\ba{lll}
dZ_{t} &=&\Xi_{t} dt+ \Gamma_t d \widehat X_t,  \\
dY_{t} &=&-H\big( \widehat{X}_{t},\Gamma _{t}\big)dt+Z_{t} d\widehat{X}_t 
 + \frac12 \big(\hat\si \widehat X_t \big)^\intercal \Gamma_t \big(\hat\si\widehat X_t \big)dt,
\ea\right.
\ee
where  $(Y,Z,\Gamma, \Xi)$ is a quadruple taking values in $\mathbb{R},$ $\mathbb{R}^{d},S^{d}$ (the space of symmetric $d\times d$ matrices) and $\mathbb{R}^{d}$ respectively. In particular, if the HJB equation \eqref{eq:HJBuvm} has a smooth solution, it follows from the It\^o's formula that
\be\label{eq:PDE2BSDE}
Y_t:= u(t, \widehat X_t),\q Z_t := \pa_x u(t, \widehat X_t), \q \Gamma_t:= \pa^2_{xx} u(t, \widehat X_t)
\ee
satisfy the dynamics \eqref{dym2bsde} with a certain process $\Xi$. In Cheridito, Soner, Touzi \& Victoir \cite{CSTV}, the authors studied the existence and uniqueness of the quadruple  $(Y,Z,\Gamma, \Xi)$ satisfying the dynamics \eqref{dym2bsde} with the terminal condition $Y_T = g(\widehat X_T)$, without assuming the existence of smooth solution to the HJB equation \eqref{eq:HJBuvm}, and they gave the name `2BSDE' to this problem. For the readers interested in the theory of 2BSDE, we refer to \cite{CSTV} and Soner, Touzi \& Zhang \cite{STZ} for more details.

\ms

\subsubsection*{Numerical scheme for 2BSDE}

We are interested in solving the 2BSDE numerically. In the existing literature, one may find several different numerical schemes for this problem (see for example \cite{CSTV, fah, guy}). Here we recall the one proposed in Guyon \& Henry-Labord\`ere \cite{guy}. Introduce the partition $\{t_i\}_{i\le n}$ on the interval $\left[ 0,T\right] $, and denote $\Delta t_{i}=t_{i}-t_{i-1}$, $\Delta
W_{t_{i}}=W_{t_{i}}-W_{t_{i-1}}$. First, the diffusion $\widehat X$ can be written explicitly:
\beaa
\widehat{X}_{t_{i}}^{j}=\widehat{X}_{0}^{j}e^{-({\hat{\sigma}}^{j})^{2}{%
\frac{t_{i}}{2}}+{\hat{\sigma}}^{j}W_{t_{i}}^{j}}\;,\q\mbox{with}~~\Delta
W_{t_{i}}^{j}\Delta W_{t_{i}}^{k}=\hat{{\rho }}_{jk}\Delta t_{i} .
\eeaa
Denote by $\widehat Y, \widehat \Gamma$ the numerical approximations of $Y, \Gamma$. In the backward scheme in \cite{guy}, we set $\widehat Y_{t_{n}} ~=~ g\big(\widehat{X}_{t_{n}}\big)$, and then compute
\beaa
&\hat{\sigma}^{j}\hat{\sigma}^{k}\widehat{X}_{0}^{j}\widehat{X}_{0}^{k}{\
\widehat\Gamma _{t_{i-1}}^{jk}}
~=~
\mathbb{E}_{i-1} \Big[\widehat Y_{t_{i}}\big(U_{t_{i}}^{j}U_{t_{i}}^{j}-(\Delta t_{i})^{-1}\hat{\rho}_{jk}^{-1}-\hat{%
\sigma}^{j}U_{t_{i}}^{j}\delta _{jk}\big) \Big] & \notag\\
& \mbox{with}\q U_{t_{i}}^{j} ~:=~ \sum_{k=1}^{d}\hat{\rho}_{jk}^{-1}\Delta
W_{t_{i}}^{k}/\Delta t_{i},  \q\mbox{and} &\\
& \widehat Y_{t_{i-1}}
~=~
 \mathbb{E}_{i-1} \big[\widehat Y_{t_{i}}\big]+\Big( H(%
\widehat{X}_{t_{i-1}},{\ \widehat\Gamma _{t_{i-1}}})-{\frac{1}{2}}\sum_{j,k=1}^{n}%
\widehat{X}_{t_{i-1}}^{j}\widehat{X}_{t_{i-1}}^{k}{\Gamma _{t_{i-1}}^{jk}}%
\hat{\rho}_{pk}\hat{\sigma}^{j}\hat{\sigma}^{k}\Big) \Delta t_{i},& \notag
\eeaa
where $\dbE_i$ denotes the conditional expectation with respect to the filtration $\cF_{t_i}$. Below, we denote $u_{0}^{\mathrm{BSDE}} := \widehat Y_0$.

\ms

\subsubsection*{Lower and upper bound for the value function}

Once $\widehat\Gamma$ is computed, one gets a (sub-optimal) estimation of the controls $(\hat\sigma ^*,\hat\rho^*)$:
\beaa
\big(\hat\sigma^*_{t_i},\hat\rho^*_{t_i}\big)~:=~ {\rm argmax}_{(\sigma ^{j},\rho ^{jk})_{1\leq j,k \leq
d}\in A}\sum_{j,k=1}^{d}\rho ^{jk}\sigma ^{j}\sigma ^{k} \widehat X^{j}_{t_i} \widehat X^{k} _{t_i} \widehat\Gamma^{jk}_{t_i},\q \mbox{for}~~0\le i\le n.
\eeaa
 Performing a second independent
(forward) Monte-Carlo simulation using this sub optimal control, we obtain a lower
bound for the value function \eqref{valuefunUVM}:
\beaa
u_{0}^{\mathrm{LS}}
~:= ~  \dbE\big[g(X^{\hat\si^*,\hat\rho^*}_T)\big]
~\leq~ {u}_{0}.
\eeaa 

We next calculate the dual bound derived in the current paper. As mentioned in Remark \ref{rem:1}, we will use the numerical approximation of $\pa_{x}u$ to serve
as the minimizer $\varphi ^{\ast }$ in the dual form. Also, we observe from \eqref{eq:PDE2BSDE} that the process $Z$ in the 2BSDE plays the corresponding role of $\pa_x u$, and we can compute the numerical approximation $\widehat Z$ of $Z$:
\begin{equation*}
\hat{\sigma}^{j}\widehat{X}_{t_i}^{j}\widehat Z^{j}_{t_i}={\mathbb{E}}_{i-1} \big[\widehat Y_{t_{i}}U_{t_{i}}^{j} \big].
\end{equation*}%
Then we define
\begin{equation*}
\varphi _{t}^{\ast }=\sum_{i=1}^{n} \widehat Z_{t_{i-1}} 1_{ \lbrack t_{i-1},t_{i})}(t).
\end{equation*}%
 Using our candidate $\varphi
^{\ast }$ in the minimization, we get an upper bound 
\begin{equation*}
u_{0}^{\mathrm{LS}}
~\leq~
 u_{0}
 ~\leq~ u_{0}^{\mathrm{dual}} 
 ~:=~
\lim_{h\rightarrow 0}{\mathbb{E}}\Big[\max_{(\sigma ,\rho )\in \mathcal{D}_{h}}
\Big\{g(X^{\si,\rho}_{t_{n}})
-\sum_{i=1}^{n}\varphi _{t_{i-1}}^{\ast }(X^{\si,\rho})( X^{\si,\rho}_{t_{i}} - X^{\si,\rho}_{t_{i-1}})\Big\}\Big].
\end{equation*}

\ms

\subsubsection*{The algorithm}

 Our whole algorithm can be summarized by the following four steps:

\begin{enumerate}
\item Simulate $N_1$ replications of $\widehat X$ with a lognormal diffusion (we
choose $\hat{\sigma}=(\overline{\sigma}+\underline{\sigma})/2$).

\item Apply the backward algorithm using a regression approximation. Compute $%
Y_{0}=u_{0}^{\mathrm{BSDE}}.$

\item Simulate $N_2$ independent replication of $X^{\hat\si^*,\hat\rho^*}$ using the sub-optimal
control $(\hat\si^*,\hat\rho^*)$. Give a low-biased estimate $u_0^{\mathrm{LS}}$.

\item Simulate independent increment $\Delta W_{t_{i}}$ and maximize 
$g(X^{\si,\rho}_{t_{n}})-\sum_{i=1}^{n}\varphi_{t_{i-1}}^{\mathrm{\ast}}(X^{\si,\rho})(X^{\si,\rho}_{t_{i}}-X^{\si,\rho}_{t_{i-1}})$
over $(\sigma,\rho)\in \cD_h$. In our numerical
experiments, as the payoff may be non-smooth, we have used a direct search
polytope algorithm. Then compute the average.
\end{enumerate}

\subsubsection*{Numerical experiments}

  In our experiments, we take $T=1$ year and for the $i$-th asset, $X_{0}^{i}=100$, $\underline{\sigma }^{i}=0.1$, $\overline{%
\sigma }^{i }=0.2$ and we use the constant mid-volatility $\hat{\sigma}^{i}=0.15$ to generate the first $N_{1}=2^{15}$ replication of $\widehat X$. For the
second independent Monte-Carlo using our sub-optimal control, we take $N_2=2^{15}$ replications of $X$ and a time step $\Delta _{\mathrm{LS%
}}=1/400$. In the backward and dual algorithms, we choose the time step $\Delta$ among $\{1/2, 1/4, 1/8, 1/12\}$, which gives the biggest $u_{0}^{\mathrm{LS}}$ and the smallest $u_{0}^{\mathrm{dual}}$. The conditional
expectations at $t_{i}$ are computed using parametric regressions. The
regression basis consists in some polynomial basis. The exact price is obtained by
solving the (one or two-dimensional) HJB equation with a finite-difference
scheme.

\begin{enumerate}
\item $90-110$ call spread $(X_{T}-90)^{+}-(X_{T}-110)^{+}$, basis= $5$%
-order polynomial: 
\begin{equation*}
u_{0}^{\mathrm{LS}}=11.07<u_{0}^{\mathrm{PDE}}=11.20<u_{0}^{\mathrm{dual}%
}=11.70,\;u_{0}^{\mathrm{BSDE}}=10.30
\end{equation*}

\item Digital option $1_{X_{T}\geq 100}$, basis= $5$-order polynomial: 
\begin{equation*}
u_{0}^{\mathrm{LS}}=62.75<u_{0}^{\mathrm{PDE}}=63.33<u_{0}^{\mathrm{dual}%
}=66.54,\;u_{0}^{\mathrm{BSDE}}=52.03
\end{equation*}

\item Outperformer option $(X_{T}^{2}-X_{T}^{1})^{+}$ with 2 uncorrelated
assets, 
\begin{equation*}
u_{0}^{\mathrm{LS}}=11.15<u_{0}^{\mathrm{PDE}}=11.25<u_{0}^{\mathrm{dual}%
}=11.84,\;u_{0}^{\mathrm{BSDE}}=11.48
\end{equation*}

\item Outperformer option with 2 correlated assets $\rho =-0.5$ 
\begin{equation*}
u_{0}^{\mathrm{LS}}=13.66<u_{0}^{\mathrm{PDE}}=13.75<u_{0}^{\mathrm{dual}%
}=14.05,\;u_{0}^{\mathrm{BSDE}}=14.14
\end{equation*}

\item Outperformer spread option $%
(X_{T}^{2}-0.9X_{T}^{1})^{+}-(X_{T}^{2}-1.1X_{T}^{1})^{+}$ with 2 correlated
assets $\rho =-0.5$, {\ 
\begin{equation*}
u_{0}^{\mathrm{LS}}=11.11<u_{0}^{\mathrm{PDE}}=11.41<u_{0}^{\mathrm{dual}%
}=12.35,\;u_{0}^{\mathrm{BSDE}}=9.94
\end{equation*}%
}
\end{enumerate}
In examples 3.-5. the regression basis we used consists
of 
\begin{equation*}
\{1,X^{1},X^{2},(X^{1})^{2},(X^{2})^{2},X^{1}X^{2}\}.
\end{equation*}

\begin{rem}
The dual bounds we have derived complement the lower bounds derived in \cite%
{guy}. They allow us to access the quality of the regressors used in
computing the conditional expectations.
\end{rem}

\subsection{Credit value adjustment\label{sec:6}}

Our second example arises in credit valuation adjustment. We will show that
for this particular example, we can solve the deterministic optimization
problems arising in the dual algorithm efficiently by recursively solving
ODEs. 

\subsubsection*{CVA interpretation}

%The nonlinear PDE $\left( \ref{HJB UVM}\right) $ corresponds to a pricing
%equation in the case of 
Let us recall the problem of the unilateral counterparty value adjustement (see \cite{guyPHL} for more details). We have one counterparty, denoted by C, that may
default and another, B, that cannot. We assume that B is allowed to trade
dynamically in the underlying $X$ - that is described by a local martingale 
\beaa
dX_{t}=\sigma (t,X_{t})dW_{t},\q\mbox{with $W$ a Brownian motion,}
\eeaa
 under a risk-neutral measure.
The default time of C is modeled by an exponential variable $\t$ with a
intensity $c$, independent of $W$. We denote by $u_0$ the value at time $0$ of B's long position in a single
derivative contracted by C, given that C has not defaulted so far. For
simplicity, we assume zero rate.  Assume that $g(X_T)$ is the payoff of the derivative at maturity $T$, and that $\tilde{u}$ is the derivative value just after the counterparty has
defaulted. Then, we have
\beaa
u_0 &=& \dbE\Big[g(X_T) 1_{\{\t >T\}} + \tilde u(\t,X_\t) 1_{\{\t \le T\}} \Big]\\
	&=&  \dbE\Big[ e^{-cT} g(X_T) + \int_0^T \tilde u(t,X_t) c e^{-ct} dt \Big].
\eeaa
Write down the dynamic version:
\beaa
u(t,x) &=&  \dbE\Big[ e^{-c(T-t)} g(X_T) + \int_t^T e^{-c(s-t)} c\tilde u(s,X_s) ds \Big| X_t =x \Big]. 
\eeaa
The function $u$ can be characterized by the equation: 
\begin{equation*}
\partial _{t}u+\frac{1}{2}\sigma ^{2}(t,x)\partial _{xx}^{2}u+c\left( \tilde{%
u}-u\right) =0, \q u(T,x) = g(x).
\end{equation*}%
 At the default event, in the case of zero recovery, we assume that $\tilde{u}$ is given by 
 \begin{equation*}
\tilde{u} ~=~ u^{-},
\end{equation*}%
where $x^-:= \max (0,-x)$.
Indeed, if the value of $u$ is positive, meaning that $u$ should be paid by
the counterparty, nothing will be received by B after the default. If the
value of $u$ is negative, meaning that $u$ should be received by the
counterparty, B will pay $u$ in the case of default of $C$. 

\begin{rem}
The funding value adjustment (FVA) corresponds to a similar nonlinear equation. 
\end{rem}

By the following change of variable
\begin{equation*}
u(t,x)^{\mathrm{HJB}}=e^{c(T-t)}u (t,x),
\end{equation*}
the function $u^{\mathrm{HJB}}$ satisfies the HJB equation:
\begin{equation}\label{HJB UVM}
\partial _{t}u^{\mathrm{HJB}}+\frac{1}{2}\sigma ^{2}(t,x)\partial
_{xx}^{2}u^{\mathrm{HJB}}+c(u^{\mathrm{HJB}})^{-}=0,\q u^{\rm HJB}(T,x) =g(x).
\end{equation}
The stochastic representation is:
\beaa
u^{\mathrm{HJB}}(t,x)=\sup_{\a \in \cA}{\mathbb{E}}\Big[e^{-\int_{t}^{T} \a_{s} ds}g(X_{T}) \big| X_{t} =x\Big],
\q\mbox{with}\q A:=[0,c].
\eeaa

\subsubsection*{Dual Bound}

We are interested in deriving an efficient upper bound for $u^{\mathrm{HJB}%
}(0,X_{0}).$ Denoting $R^a_{t}=e^{\int_{0}^{t} a_{s}ds}$, our dual expression is 
\begin{eqnarray}
u^{\mathrm{HJB}}(0,X_{0}) &=&\lim_{h\rightarrow 0}\inf_{\varphi \in 
\mathcal{U}}{\mathbb{E}}\left[ \sup_{a \in \cD_h}\{R^a_T g(X_{T})-\int_{0}^{T}R^a_t\varphi
(t,X_{t})dX_{t}\}\right]  \notag \\
&\leq &\lim_{h\rightarrow 0}{\mathbb{E}}\left[ \sup_{a \in \cD_h}\{R^a_T g(X_{T})-\int_{0}^{T} R^a_t \varphi ^{\ast }(t,X_{t})dX_{t}\}\right] ,  \notag
\end{eqnarray}%
where $\varphi ^{\ast }$ is a fixed strategy. Rewriting the integral in
Stratonovich form, we have%
\begin{eqnarray*}
&&\int_{0}^{T} R^a_t \varphi ^{\ast }(t,X_{t})dX_{t} \\
&=&\int_{0}^{T} R^a_t \varphi ^{\ast }(t,X_{t})\circ dX_{t}-\frac{1%
}{2}\int_{0}^{T} R^a_t \partial_x \varphi
^{\ast } (t,X_{t})\sigma ^{2}(t,X_{t})dt
\end{eqnarray*}%
\noindent Therefore, using the classical Zakai approximation of the
Stratonovich integral, it follows that 
\begin{eqnarray*}
&&{\mathbb{E}}\left[ \sup_{a \in \cD_h}\Big\{ R^a_T g(X_{T})-\int_{0}^{T} R^a_t \varphi ^{\ast }(t,X_{t})dX_{t}\Big\}%
\right] \\
&=&\lim_{n\rightarrow \infty } {\mathbb{E}}\left[ \sup_{a \in \cD_h} \Big\{R^a_T g(X_{T}^{n}) -\int_{0}^{T} R^a_t \varphi ^{\ast }(t,X_{t}^{n})\circ
dX_{t}^{n}+\frac{1}{2}\int_{0}^{T} R^a_t \partial_x \varphi ^{\ast }(t,X_{t}^{n})\sigma ^{2}(t,X_{t}^{n})dt \Big\}
\right] \\
&=&\lim_{n\rightarrow \infty} \mathbb{E}\left[ \sup_{a \in \cD_h} \Big\{R^a_T g(X_{T}^{n}) -\int_{0}^{T}R^a_t\Big(\varphi ^{\ast
}(t,X_{t}^{n})\sigma (t,X_{t}^{n})\dot{W}_{t}^{n}-\frac{1}{2}\partial_x\varphi^{\ast} ( t,X_{t}^{n})\sigma
^{2}(t,X_{t}^{n})\Big)dt\right] \\
&\leq &\lim_{n\rightarrow \infty }\mathbb{E}\left[ \sup_{a \in 
\mathcal{\tilde{D}}} \Big\{R^a_T g(X_{T}^{n}) -\int_{0}^{T} R^a_t \Big(\varphi ^{\ast
}(t,X_{t}^{n})\sigma (t,X_{t}^{n})\dot{W}_{t}^{n}-\frac{1}{2}\partial_x\varphi ^{\ast}(t,X_{t}^{n})\sigma
^{2}(t,X_{t}^{n})\Big)dt\right] ,
\end{eqnarray*}%
where $\mathcal{\tilde{D}}:= \big\{a:[0,T]\rightarrow \dbR \big| ~\mbox{a is measurable, and}~ 0\le a_t \le c~\mbox{for all $t\in [0,T]$} \big\}$. For almost every $\omega $ we may consider for all $n$ the
following deterministic optimization problem. Set 
\begin{eqnarray*}
&g_{\omega ,n}=g(X_{T}^{n}(\omega )),\quad \alpha _{\omega ,n}(t)=-\varphi
^{\ast }(t,X_{t}^{n}\left( \omega \right) )\sigma (t,X_{t}^{n}(\omega ))\dot{%
W}_{t}^{n}\left( \omega \right) ,& \\
&\beta _{\omega ,n}\left( t\right) =\frac{1}{2}
\partial_x \varphi ^{\ast } (t,X_{t}^{n}\left( \omega \right) )\sigma
^{2}(t,X_{t}^{n}(\omega )),&
\end{eqnarray*}%
and consider the function: 
\begin{eqnarray*}
u_{\omega ,n}^{\mathrm{HJ}}(t)=\sup_{a \in \tilde\cD}\Big\{%
 \frac{R^a_T}{R^a_t} g_{\omega ,n}+\int_{t}^{T} \frac{R^a_s}{R^a_t}
\big(\alpha _{\omega ,n}(s)+\beta _{\omega ,n}\left( s\right) \big)ds\Big\}.
\end{eqnarray*}%
\noindent Note that $u^{\mathrm{HJ}}$ is the solution of the (path-wise)
Hamilton-Jacobi equation: 
\begin{equation*}
(u_{\omega ,n}^{\mathrm{HJ}})^{\prime }\left( t\right) +c\left( u_{\omega
,n}^{\mathrm{HJ}}\left( t\right) \right) ^{-}+\alpha _{\omega ,n}(t)+\beta
_{\omega ,n}\left( t\right) =0,\quad u_{\omega ,n}^{\mathrm{HJ}%
}(T)=g_{\omega ,n}.
\end{equation*}

\noindent The ODE for $u_{\omega ,n}^{\mathrm{HJ}} $ can be solved
analytically. Fix a $t^{0}\in \lbrack 0,T]$, and let%
\begin{equation*}
t^{\ast }=\sup \left\{ s<t^{0}:u_{\omega ,n}^{\mathrm{HJ}}(t^{0})u_{\omega
,n}^{\mathrm{HJ}}(s)<0\right\} \vee 0.
\end{equation*}%
For all $t\in \left[ t^{\ast },t_{0}\right] $ we get the following
recurrence equation: 
\begin{eqnarray*}
u_{\omega ,n}^{\mathrm{HJ}}(t) &=&\left\{ 
\begin{array}{lll}
\displaystyle-\int_{t}^{t^{0}}e^{-c(s-t)}\big(\alpha _{\omega ,n}(s)+\beta
_{\omega ,n}(s)\big)ds+u_{\omega ,n}^{\mathrm{HJ}}(t^{0})e^{c(t^{0}-t)},%
\quad u_{\omega ,n}^{\mathrm{HJ}}(t^{0})<0 &  &  \\ 
\displaystyle-\int_{t}^{t^{0}}\big(\alpha _{\omega ,n}(s)+\beta _{\omega
,n}(s)\big)ds+u_{\omega ,n}^{\mathrm{HJ}}(t^{0}),\quad \quad \quad \quad
\quad ,\quad u_{\omega ,n}^{\mathrm{HJ}}(t^{0})>0 &  & 
\end{array}%
\right. ,\text{ } \\
u_{\omega ,n}^{\mathrm{HJ}}(T) &=&g_{\omega ,n}.
\end{eqnarray*}

\noindent Finally, we observe that, 
\begin{equation*}
u^{\mathrm{HJB}}(0,X_{0})\leq \lim_{n\rightarrow \infty }{\mathbb{E}}\big[%
u_{\omega ,n}^{\mathrm{HJ}}(0)\big].
\end{equation*}%
We illustrate the quality of our bounds by the following numerical example.

\begin{rem}
This example falls into the framework of \cite{DB} and \cite{DFG}. By virtue of
their (continuous) pathwise analysis the upper bounds derived above could in
the limit be replaced with equalities. Only the error introduced by the
choice of $\varphi ^{\ast }$ remains.
\end{rem}

\subsubsection*{Numerical example}

We take $\sigma (t,x)=1$, $T=1$ year, $X_{0}=0$. $g(x)=x$. We use two
choices: $\varphi ^{\ast }(t,x)=e^{-c(T-t)}$ (which corresponds to $\partial
_{x}u^{\mathrm{HJB}}$ at the first-order near $c=0$) and $\varphi ^{\ast
}(t,x)=0$. We have computed ${\mathbb{E}}\big[u_{\omega ,n}^{\mathrm{HJ}}(0)%
\big]$ as a function of the time discretization (see Table \ref{tab1} and %
\ref{tab2}). The exact value has been computed using a one-dimensional PDE
solver (see column PDE). We have used different values of $c$ corresponding
to a probability of default at $T$ equal to $(1-e^{-cT})$.

\begin{table}[b]
\centering
\begin{tabular}{|c|c|c|c|c|c|c|c|c|c|}
\hline
$c\;,(1-e^{-cT})$ & PDE & 1/2 & 1/4 & 1/8 & 1/12 & 1/24 & 1/50 & 1/100 & 
1/200 \\ \hline
$0.01\;(1\%)$ & 0.26 & 0.23 & 0.25 & {0.26} & 0.26 & 0.26 & 0.26 & 0.26 & 
\textbf{0.26} \\ \hline
$0.05\;(4.9\%)$ & 1.29 & 1.14 & 1.22 & 1.26 & 1.27 & {\ 1.28} & {\ 1.29} & {%
\ 1.29} & \textbf{1.29} \\ \hline
$0.1\;(9.5\%)$ & 2.52 & 2.24 & 2.39 & 2.46 & 2.48 & 2.51 & 2.52 & {2.52} & 
\textbf{2.52} \\ \hline
$0.7\;(50.3\%)$ & 13.60(0) & 12.63(1) & 13.25(2) & 13.53(5) & 13.61(7) & 13.71(18) & 13.75(44) & 
13.77(112)  & \textbf{13.77} \\ \hline
\end{tabular}%
\caption{The numerical results of ${\mathbb{E}}\big[u_{\protect\omega ,n}^{\mathrm{HJ}}(0)\big]$ with the different time steps when $\protect\varphi^{\ast
}(t,x)=e^{-c(T-t)}$. The numbers in the brackets indicate the CPU times (Intel Core 2.60GHz)  in  seconds for the case $c=0.7$ with $N=8192$ Monte-Carlo paths. }
\label{tab1}
\end{table}

\begin{table}[tbp]
\centering
\begin{tabular}{|c|c|c|}
\hline
$c\;,(1-e^{-cT})$ & PDE & $\dbE\big[u_{\omega ,n}^{\mathrm{HJ}}(0)\big]$ \\ 
\hline
$0.01\;(1\%)$ & 0.26 & 0.40 \\ \hline
$0.05\;(4.9\%)$ & 1.30 & 1.95 \\ \hline
$0.1\;(9.5\%)$ & 2.53 & 3.80 \\ \hline
$0.7\;(50.3\%)$ & 13.60 & 20.08 \\ \hline
\end{tabular}%
\caption{The numerical results of ${\mathbb{E}}\big[u_{\protect\omega ,n}^{\mathrm{HJ}}(0)\big]$ when 
$\protect\varphi ^{\ast }(t,x)=0$. }
\label{tab2}
\end{table}

The approximation has two separate sources of error. First, there is the
suboptimal choice of the minimizer $\varphi ^{\ast }$ for the discretized
optimization implying an upper bias. The second error arises from the
discretization of the deterministic optimization problems, which could underestimates the true value of the optimization. The choice $%
\varphi ^{\ast }=e^{-c(T-t)}$ in our example - as expected - is close to be
optimal, so the errors arising from the discretization dominate. In the contrary, the choice $\f^* =0$ is far from being optimal, so the numerical results are much bigger than the value function.

\end{document}